%% file: main.tex
\documentclass[11pt,letterpaper]{article}

\usepackage[dvipsnames]{xcolor}

\input{header}

\title{Lehmer Codes and the Reverse-Complement Mapping from \underline{32}1-Avoiding Permutations to 3\underline{21}-Avoiding Permutations}
\author{Andrew Beveridge\footnote{Department of Mathematics, Statistics and Computer Science, Macalester College, St Paul, MN, USA}, Yufan Hu$^*$ and Yucheng Liu$^*$}

\date{}

\begin{document}

\maketitle


\begin{abstract}
Let $\mathcal{S}_n(\underline{32}1)$ and $\mathcal{S}_n(3\underline{21})$ denote the sets of $n$-permutations avoiding the vincular patterns $\underline{32}1$ and $3\underline{21}$, respectively. Using Lehmer codes, we realize these families as weighted posets $\mathcal{L}_n(\underline{32}1)$ and $\mathcal{L}_n(3\underline{21})$, where the weight of a code is the inversion number of its permutation. We show that the maximal elements of each of these posets, $\Max \mathcal{L}_n(\underline{32}1)$ and $\Max \mathcal{L}_n(3\underline{21})$, are enumerated by the Fibonacci numbers. We demonstrate that the classical reverse-complement map on permutations restricts to a natural bijection between these two sets of maximal elements, revealing a deep  symmetry between their underlying poset structures.




\end{abstract}

\noindent
{\bf Keywords:} permutation,  pattern avoidance, vincular pattern, Lehmer code, inversion number, poset

\medskip

\noindent
{\bf 2020 Mathematics Subject Classification:} 05A05, 05A19

\input{section-introduction}

\input{section-preliminaries}

\input{section-32-1}

\input{section-3-21}

\input{section-gravity-bijection}

\section{Conclusion}

\label{sec:conc}

The construction of $\Max \lehmerpat{n}{\underline{32}1}$ and $\Max_w \lehmerpat{n}{\underline{32}1}$ was established in \cite{BHR}, and this latter set corresponds to the $n$-permutations with maximum inversion number. 
Meanwhile, characterizing $\Max \lehmerpat{n}{\underline{32}1}$ and $\Max_w \lehmerpat{n}{\underline{32}1}$ remained open. Theorem \ref{thm:gravity_bijection} provides the explicit bijection resolving this gap. 
We show that the classical reverse-complement map $\pi \mapsto \revcompl{\pi}$ induces a simple mapping $\F$ from
$\Max \lehmerpat{n}{\underline{32}1}$ to $\Max \lehmerpat{n}{3\underline{21}}$ which has a natural geometric ``flip and fall'' interpretation. 

Looking to future work, we wonder what else can be said about the poset structures of $\lehmerpat{n}{\underline{32}1}$ and $\lehmerpat{n}{3\underline{21}}$, and whether there are further correspondences between them. It is known that $\lehmerpat{n}{\underline{32}1}$ is a meet-semlattice \cite{BHR}, but this is not the case for $\lehmerpat{n}{3\underline{21}}$. We are also curious about whether existing results about inversion numbers and 321-permutations can be adapted to $\underline{32}1$-avoiding permutations or $3\underline{21}$-avoiding permutations. This includes counting permutations by length and inversion number \cite{chen-mei-wang} and the inversion polynomial \cite{cheng_inversion_2013}.

\bibliography{biblio}

\end{document}

%% file: header.tex
\usepackage{amsmath, amssymb, amsfonts, latexsym, stmaryrd, enumerate, comment}
\usepackage{amsthm}
\usepackage{tikz, tikz-cd, tikzsymbols}
\usepackage[english]{babel}
\usepackage{xcolor}
\usepackage{mathdots}
\usepackage{mathtools}

\usetikzlibrary{decorations.pathreplacing}

\newtheorem{theorem}{Theorem}

\newtheorem{cor}[theorem]{Corollary}
\newtheorem{lemma}[theorem]{Lemma}

\newtheorem*{theorem*}{Theorem}
\newtheorem*{thm*}{Theorem}
\newtheorem*{lemma*}{Lemma}
\newtheorem*{conj*}{Conjecture}
\theoremstyle{definition}
\newtheorem{definition}[theorem]{Definition}

\numberwithin{theorem}{section}
\numberwithin{equation}{section}
\numberwithin{figure}{section}

\newcommand{\perm}[1]{\mathcal{S}_{#1}}
\newcommand{\permpat}[2]{\perm{#1}(#2)}

\newcommand{\lehmer}[1]{\mathcal{L}_{#1}}
\newcommand{\lehmerpat}[2]{\lehmer{#1}(#2)}

\newcommand{\desc}[1]{\mathcal{D}_{#1}}
\newcommand{\descpat}[2]{\desc{#1}(#2)}

\newcommand{\descmap}[1]{\Delta_{#1}}

\DeclareMathOperator{\inv}{inv}

\DeclareMathOperator{\Max}{Max}

\DeclareMathOperator{\Rev}{Rev}

\newcommand{\rev}[1]{#1^r}
\newcommand{\revlem}[1]{r(#1)}
\newcommand{\compl}[1]{#1^c}
\newcommand{\revcompl}[1]{#1^{r.c}}
\newcommand{\kompl}[1]{c(#1)}
\newcommand{\revkompl}[1]{c \circ \revlem{#1}}
\newcommand{\revkomplempty}{c \circ r}

\newcommand{\F}{F}
\newcommand{\flip}{\Phi}

\newcommand{\rmin}{\operatorname{RMin}}
\newcommand{\lmax}{\operatorname{LMax}}

\newcommand{\maxset}[2]{\Max \lehmerpat{#1}{#2}}

\newcommand{\comp}[1]{{#1}^c}

\newcounter{x}
\newcounter{y}

\newcommand\emptycol[1]{
   \draw[thick] ( - #1, 0) -- (-#1, #1) -- (-#1-1, #1) -- (-#1-1, 0) -- cycle;   
   \foreach \c in {1,...,#1} {
      \draw[thick] (-#1, \c) -- (-#1-1, \c);
  }      
}

\newcommand\fillcol[3]{
    \fill[#3!50] (-#1, 0) -- (-#1, #2) -- (-#1-1, #2) -- (-#1-1, 0) -- cycle;
    
}

\newcommand\fillrow[3]{
    \fill[#3!50] (0, #1) -- (#2, #1) -- (#2, #1+1) -- (0, #1+1) -- cycle;
}

\newcommand\newtri[3][(0,0)]{
 \setcounter{y}{0}
  \foreach \a in {#2} {
      \addtocounter{y}{1}
    }

\begin{scope}[shift={#1}]
\begin{scope}[scale=0.33, shift={(-\value{y}/2,0)}]    
 \setcounter{x}{0}
  \foreach \a in {#2} {
      \addtocounter{x}{-1}
      \fillcol{\value{x}}{\a}{#3}
    }
  \end{scope}
\end{scope}

\begin{scope}[shift={#1}]
\begin{scope}[scale=0.33, shift={(\value{y}/2+1,0)}]    
 \setcounter{x}{0}
  \foreach \a in {#2} {
      \addtocounter{x}{1}
      \emptycol{\value{x}}
    }
  \end{scope}
\end{scope}
}

\newcommand\fliptri[3][(0,0)]{
 \setcounter{y}{0}
  \foreach \a in {#2} {
      \addtocounter{y}{1}
    }

\begin{scope}[shift={#1}]
\begin{scope}[scale=0.33, shift={(-\value{y}/2,0)}]    
 \setcounter{x}{-1}
  \foreach \a in {#2} {
      \addtocounter{x}{1}
      \fillrow{\value{x}}{\a}{#3}
    }
  \end{scope}
\end{scope}

\begin{scope}[shift={#1}]
\begin{scope}[scale=0.33, shift={(\value{y}/2+1,0)}]    
 \setcounter{x}{0}
  \foreach \a in {#2} {
      \addtocounter{x}{1}
      \emptycol{\value{x}}
    }
  \end{scope}
\end{scope}
}

\usepackage{hyperref}

\hypersetup{
  colorlinks   = true,
  urlcolor     = blue, 
  linkcolor    = blue,
  citecolor   = red
}

%% file: section-introduction.tex
\section{Introduction}

The study of pattern avoidance in permutations has evolved into a dynamic branch of enumerative combinatorics since the first systematic investigation by Simion and Schmidt \cite{simion1985restricted}.
Given permutations $\tau = (\tau_1, \tau_2, \ldots, \tau_k) \in \perm{k}$ and $\pi = (\pi_1, \pi_2, \cdots, \pi_n) \in \perm{n}$, we say that $\pi$ \emph{contains} $\tau$ when there exist indices $1 \leq i_1 < i_2 < \cdots < i_k \leq n$ such that the entries of subsequence $(\pi_{i_1}, \pi_{i_2}, \ldots, \pi_{i_k})$ are in the same relative order as the entries of $\tau$; otherwise, we say that $\pi$ \emph{avoids} $\tau$. 
We use $\permpat{n}{\tau}$ to denote the subset $\tau$-avoiding permutations in $\perm{n}$.
Investigation of \emph{vincular patterns} (also called \emph{generalized patterns})
was pioneered by Babson and Steingr\'imsson \cite{Babson2000}. A \emph{vincular pattern} $\tau$ is a permutation in $\perm{k}$, some of whose consecutive entries are underlined. If $\pi \in \perm{n}$ contains vincular pattern $\tau$, and $\tau$ contains $\underline{\tau_i \tau_{i+1} \cdots \tau_j}$, then the entries of $\pi$ corresponding to $\tau_i, \tau_{i+1}, \ldots, \tau_j$ in $\pi$ must be adjacent.
See Kitaev \cite{kitaev} for a comprehensive treatment of patterns in permutations, and see Steingr\'imsson \cite{steingrimsson} for a survey on vincular patterns.

Often, the fundamental objective of pattern avoidance research is to enumerate the set 
or to establish structural bijections between seemingly distinct pattern avoiding families; examples include \cite{claesson, biers-ariel, li2022, mansour2025-flatten}.  In this paper, we explore the structure of 
$\permpat{n}{\underline{32}1}$ and $\permpat{n}{3\underline{21}}$.
Claesson \cite{claesson} proved that both families are enumerated by the Bell numbers,
$| \permpat{n}{\underline{32}1} | = | \permpat{n}{3\underline{21}} | = B_n.$
By translating these permutations into their \emph{Lehmer codes}, 
we uncover their underlying poset structures and characterize their maximal configurations, providing both exact enumerative results and natural bijective mappings.

\begin{definition}
    The collection $\lehmer{n}$ of \emph{Lehmer codes} of length $n$ is 
    $$
    \lehmer{n} = \{ (p_1, p_2, \ldots, p_n) : 0 \leq p_i \leq n-i \text{ for } 1 \leq i \leq n \}.
    $$
    For a permutation $\pi \in \perm{n}$, its Lehmer code is
$$
L(\pi) := ( p_1, p_2,\ldots , p_n) \quad \mbox{where} \quad
p_i = \left| \left\{ j > i : \pi_j < \pi_i \right\} \right|.
$$     
Given a vincular pattern $\tau$, we define
$$
\lehmerpat{n}{\tau} = \{ L(\pi) : \pi \in \permpat{n}{\tau}\}
$$
to be the Lehmer codes corresponding to the $\tau$-avoiding permutations.
The \emph{weight} of a Lehmer code is $w(p) = \sum_{i=1}^{n} p_i$, and it is clear that the inversion number of $\pi$ is $\inv (\pi) = w(p)$.
The mapping $L : \perm{n} \rightarrow \lehmer{n}$  is a bijection \cite{lehmer}, and we describe the inverse mapping in Section \ref{sec:decode}.
\end{definition}

The collection of Lehmer codes admits a natural poset structure $(\lehmer{n}, \preceq)$. Given $s,t \in \lehmer{n}$, we have $s \preceq t$ when $s_i \leq t_i$ for $1 \leq i \leq n$. For explorations of this poset structure, see \cite{denoncourt, tomie, bouvel}. Given a vincular pattern $\tau$, we define the subposet  $(\lehmerpat{n}{\tau}, \preceq)$ where $$
\lehmerpat{n}{\tau} = \{ L(\pi) : \pi \in \permpat{n}{\tau}\}.
$$
We denote its maximal elements by $\Max \lehmerpat{n}{\tau}$ and its maximal weight elements by $\Max_w \lehmerpat{n}{\tau}$. Note that this second set 
corresponds to the subset of $\permpat{n}{\underline{32}1}$ of permutations with maximum inversion number.
Figure \ref{fig:32-1-3-21-posets} shows the posets $\lehmerpat{4}{\underline{32}1}$ and
$\lehmerpat{4}{3\underline{21}}$ using a geometric representation of a Lehmer code. We display $p \in \lehmer{n}$ as columns of (filled) boxes in a triangular grid of size $n-1$, where column $i$ contains $0 \leq p_i \leq n-i$ boxes.

\begin{figure}[ht]
\centering

\begin{minipage}[t]{0.48\textwidth}
\centering
\begin{tikzpicture}[scale=0.6]
\node (000) at (0,0) { \begin{tikzpicture}[scale=0.6]\newtri[(-1,0)]{0,0,0}{teal} \end{tikzpicture}};

\node (010) at (0,2.5) {\begin{tikzpicture}[scale=0.6]\newtri[(-1,0)]{0,1,0}{teal}\end{tikzpicture}};
\node (100) at (2,2.5) {\begin{tikzpicture}[scale=0.6]\newtri[(-1,0)]{1,0,0}{teal}\end{tikzpicture}};
\node (001) at (-2,2.5) {\begin{tikzpicture}[scale=0.6]\newtri[(-1,0)]{0,0,1}{teal}\end{tikzpicture}};

\node (020) at (0,5) {\begin{tikzpicture}[scale=0.6]\newtri[(-1,0)]{0,2,0}{teal}\end{tikzpicture}};
\node (110) at (2,5) {\begin{tikzpicture}[scale=0.6]\newtri[(-1,0)]{1,1,0}{teal}\end{tikzpicture}};
\node (011) at (-4,5) {\begin{tikzpicture}[scale=0.6]\newtri[(-1,0)]{0,1,1}{teal}\end{tikzpicture}};
\node (101) at (-2,5) {\begin{tikzpicture}[scale=0.6]\newtri[(-1,0)]{1,0,1}{teal}\end{tikzpicture}};
\node (200) at (4,5) {\begin{tikzpicture}[scale=0.6]\newtri[(-1,0)]{2,0,0}{teal}\end{tikzpicture}};

\node (120) at (1.75,7.5) {\begin{tikzpicture}[scale=0.6]\newtri[(-1,0)]{1,2,0}{teal}\end{tikzpicture}};
\node (111) at (-4,7.5) {\begin{tikzpicture}[scale=0.6]\newtri[(-1,0)]{1,1,1}{blue}\end{tikzpicture}};
\node (201) at (-0.25,7.5) {\begin{tikzpicture}[scale=0.6]\newtri[(-1,0)]{2,0,1}{teal}\end{tikzpicture}};
\node (300) at (4,7.5) {\begin{tikzpicture}[scale=0.6]\newtri[(-1,0)]{3,0,0}{teal}\end{tikzpicture}};

\node (220) at (1,10) {\begin{tikzpicture}[scale=0.6]\newtri[(-1,0)]{2,2,0}{violet}\end{tikzpicture}};
\node (301) at (-1,10) {\begin{tikzpicture}[scale=0.6]\newtri[(-1,0)]{3,0,1}{violet}\end{tikzpicture}};

\foreach \from/\to in {
000/010, 000/100, 000/001,
010/020, 010/110, 010/011,
100/101, 100/200, 100/110,
001/011, 001/101,
020/120, 200/220,
110/111, 110/120,
011/111,
101/111, 101/201,
200/201, 200/300,
201/301, 300/301,
120/220}
\draw (\from)--(\to);

\node at (0,-1.5) {$\left(\lehmerpat{4}{\underline{32}1},\preceq\right)$};

\end{tikzpicture}

\end{minipage}
\hfill
\begin{minipage}[t]{0.48\textwidth}
\centering
\begin{tikzpicture}[scale=0.6]

\node (000) at (0,0) { \begin{tikzpicture}[scale=0.6]\newtri[(-1,0)]{0,0,0}{teal} \end{tikzpicture}};

\node (010) at (0,2.5) {\begin{tikzpicture}[scale=0.6]\newtri[(-1,0)]{0,1,0}{teal}\end{tikzpicture}};
\node (001) at (2,2.5) {\begin{tikzpicture}[scale=0.6]\newtri[(-1,0)]{0,0,1}{teal}\end{tikzpicture}};
\node (100) at (-2,2.5) {\begin{tikzpicture}[scale=0.6]\newtri[(-1,0)]{1,0,0}{teal}\end{tikzpicture}};

\node (110) at (0,5) {\begin{tikzpicture}[scale=0.6]\newtri[(-1,0)]{1,1,0}{teal}\end{tikzpicture}};
\node (020) at (2,5) {\begin{tikzpicture}[scale=0.6]\newtri[(-1,0)]{0,2,0}{teal}\end{tikzpicture}};
\node (200) at (-4,5) {\begin{tikzpicture}[scale=0.6]\newtri[(-1,0)]{2,0,0}{teal}\end{tikzpicture}};
\node (101) at (-2,5) {\begin{tikzpicture}[scale=0.6]\newtri[(-1,0)]{1,0,1}{teal}\end{tikzpicture}};
\node (011) at (4,5) {\begin{tikzpicture}[scale=0.6]\newtri[(-1,0)]{0,1,1}{teal}\end{tikzpicture}};

\node (120) at (1.75,7.5) {\begin{tikzpicture}[scale=0.6]\newtri[(-1,0)]{1,2,0}{teal}\end{tikzpicture}};
\node (300) at (-4,7.5) {\begin{tikzpicture}[scale=0.6]\newtri[(-1,0)]{3,0,0}{blue}\end{tikzpicture}};
\node (201) at (-.25,7.5) {\begin{tikzpicture}[scale=0.6]\newtri[(-1,0)]{2,0,1}{teal}\end{tikzpicture}};
\node (111) at (4,7.5) {\begin{tikzpicture}[scale=0.6]\newtri[(-1,0)]{1,1,1}{teal}\end{tikzpicture}};

\node (220) at (1,10) {\begin{tikzpicture}[scale=0.6]\newtri[(-1,0)]{2,2,0}{violet}\end{tikzpicture}};
\node (211) at (-1,10) {\begin{tikzpicture}[scale=0.6]\newtri[(-1,0)]{2,1,1}{violet}\end{tikzpicture}};

\foreach \from/\to in {
000/010, 000/001, 000/100,
010/110, 010/020, 010/011,
001/011, 001/101,
100/110, 100/101, 100/200,
110/120, 110/111,
020/120, 200/220,
200/300, 200/201,
101/201, 101/111,
011/111,
120/220,
201/211, 111/211}
\draw (\from)--(\to);

\node at (0,-1.5) {$\left(\lehmerpat{4}{3\underline{21}},\preceq\right)$};

\end{tikzpicture}

\end{minipage}

\caption{The posets of $\underline{32}1$-avoiding and $3\underline{21}$-avoiding Lehmer codes of length $4$. Each poset has two maximum weight Lehmer codes (colored violet) and three maximal Lehmer codes (colored violet and blue). The codes are arranged so that the elements of $\lehmerpat{4}{\underline{32}1}$ correspond to the elements of 
$\lehmerpat{4}{3\underline{21}}$ via the reverse-complement mapping $\revkomplempty$.
}
\label{fig:32-1-3-21-posets}
\end{figure}

Both $\Max \lehmerpat{n}{\underline{32}1}$ and $\Max_w \lehmerpat{n}{\underline{32}1}$ were characterized in \cite{BHR}. The first is a Fibonacci family (with a simple recursive construction), and the second family is enumerated by OEIS A209561 \cite{oeis}.
They also provide a method to generate these maximum weight elements.
As a corollary to their main result, they note that taking the reverse-complement of a $\underline{32}1$-avoiding permutation yields a corresponding $3\underline{21}$-avoiding permutation while also preserving the inversion number, and therefore $|\Max_w \lehmerpat{n}{3\underline{21}}| = |\Max_w \lehmerpat{n}{\underline{32}1}|$. However, they do not characterize or construct either the maximal elements $\Max \lehmerpat{n}{3\underline{21}}$ or the maximum weight elements of $\Max_w \lehmerpat{n}{3\underline{21}}$. We address this gap by providing insight into the reverse-complement mapping between $\maxset{n}{\underline{32}1}$ and
$\maxset{n}{3\underline{21}}$.

For permutation $\pi = \pi_1 \pi_2 \cdots \pi_n$, its \emph{reverse} is $\rev{\pi} = \pi_n \pi_{n-1} \cdots \pi_1$ and its \emph{complement} is
$\comp{\pi}= \pi_1'\pi_2'\cdots\pi_n'$ where $\pi_k' = n+1-\pi_k$. The \emph{reverse-complement} of $\pi$ is $\revcompl{\pi} := \compl{(\rev{\pi})}$. These three mappings induce mappings between Lehmer codes. When permutation $\pi$ has Lehmer code $p$, we define $\revlem{p} = L(\rev{\pi})$ and $\kompl{p} = L(\compl{\pi})$ and $\revkompl{p} = L(\revcompl{\pi})$. In general,  $\revkomplempty$ is cumbersome to describe, requiring knowledge of both $p$ and $\pi$. Our main contribution is to show that the reverse-complement mapping $\pi \mapsto \revcompl{\pi}$ induces a natural bijection between $\Max \lehmerpat{n}{\underline{32}1}$ and $\Max \lehmerpat{n}{3\underline{21}}$ that is straightforward to calculate.
The key is to focus on the descent sets of these maximum weight Lehmer codes, so we introduce a bit more notation.

\begin{definition}
\label{def:descent-set}
    Let $p=(p_1,\ldots,p_n)\in\lehmer{n}$. The \emph{descent set} of $p$ is
    \[
        D(p)= \{j\in[n-1]:p_j>p_{j+1}\}.
    \]
    When $|D(p)|=m$, we always write
    \[
     D(p) = \{d_1,d_2,\ldots,d_m\}
    \]
    where the indices $d_1 < d_2 < \cdots < d_m$ are listed in increasing order.
\end{definition}

\begin{definition}
\label{def:descent-set-map}
Given vincular pattern $\tau$, we define the collection of descent sets
$$
\descpat{n}{\tau} = \{ D(p) : p \in \Max \lehmerpat{n}{\tau} \}
$$
and the mapping
$$
\descmap{\tau} : \Max \lehmerpat{n}{\tau} \rightarrow \descpat{n}{\tau}  
$$
given by $\descmap{\tau}(p) = D(p)$.
\end{definition}

We will see that the descent sets 
$\descpat{n}{\underline{32}1}$ and $\descpat{n}{3\underline{21}}$ are easy to describe: each one is a simple family that adheres to a Fibonacci recurrence. Furthermore, we will show that the mappings $\descmap{\underline{32}1}$ and $\descmap{3\underline{21}}$ are bijections. This brings us to our main result.



\begin{theorem}
\label{thm:gravity_bijection}
Let $\F: \maxset{n}{\underline{32}1} \rightarrow \maxset{n}{3\underline{21}}$
be given by
\begin{equation}
\label{eqn:gravity_bijection}
 \F := \descmap{3\underline{21}}^{-1} \circ \flip \circ \descmap{\underline{32}1}
\end{equation}
where 
\begin{equation}
\label{eqn:psi}
\flip(D) = \{ n-d : d \in D\}.   
\end{equation}
Then the mapping $\F$ is a bijection. Furthermore, $\F$ corresponds to taking the reverse-complement of the associated permutation. That is, for every $p \in \maxset{n}{\underline{32}1}$, we have
$$
\F(p) = \revkompl{p}.
$$
As a consequence, $w(p)= w(\F(p))$.
\end{theorem}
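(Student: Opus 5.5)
The proof rests on one observation: descents of a Lehmer code are exactly the descents of its permutation. If $L(\pi)=p$, then $p_j>p_{j+1}$ if and only if $\pi_j>\pi_{j+1}$. Indeed, if $\pi_j>\pi_{j+1}$ then every $k>j+1$ with $\pi_k<\pi_{j+1}$ also has $\pi_k<\pi_j$, and in addition $\pi_{j+1}<\pi_j$, so $p_j\ge p_{j+1}+1$. The converse is symmetric. Reverse-complement sends a descent of $\pi$ at position $j$ to a descent of $\revcompl{\pi}$ at position $n-j$. Hence, for every $p\in\lehmer{n}$,
$$D(\revkompl{p})=\flip(D(p)).$$
This identity holds for all Lehmer codes, not only maximal ones. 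With it, the theorem reduces to two facts: $\flip$ is a bijection $\descpat{n}{\underline{32}1}\to\descpat{n}{3\underline{21}}$, and $\revkomplempty$ carries $\Max\lehmerpat{n}{\underline{32}1}$ into $\Max\lehmerpat{n}{3\underline{21}}$.

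\textbf{Step 1 ($\flip$ is a bijection of descent families).} I will use the explicit Fibonacci descriptions of $\descpat{n}{\underline{32}1}$ and $\descpat{n}{3\underline{21}}$ from Sections 3 and 4, together with the fact proved there that $\descmap{\underline{32}1}$ and $\descmap{3\underline{21}}$ are bijections. I expect these descriptions to be conditions on consecutive gaps $d_{i+1}-d_i$ together with boundary conditions at $1$ and $n-1$. For $n=4$ the families are $\{\{3\},\{2\},\{1,3\}\}$ and $\{\{1\},\{2\},\{1,3\}\}$, so the two families should be mirror images under $d\mapsto n-d$. Since $\flip$ is an involution on subsets of $[n-1]$, it suffices to check that $\flip$ maps one family into the other and vice versa. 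That makes $\F$, a composition of three bijections, a bijection.

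\textbf{Step 2 ($\revkomplempty$ preserves maximality).} Reverse-complement maps $\permpat{n}{\underline{32}1}$ onto $\permpat{n}{3\underline{21}}$, because the reverse-complement of the pattern $\underline{32}1$ is $3\underline{21}$. It also preserves $\inv$. So $q=\revkompl{p}\in\lehmerpat{n}{3\underline{21}}$ and $w(q)=w(p)$. The hard part is that $\revkomplempty$ does not respect $\preceq$, so maximality of $q$ is not automatic. My approach is to use the structural description of $\Max\lehmerpat{n}{\underline{32}1}$ from Section 3 (from \cite{BHR}) to write down the permutation $\pi$ with $L(\pi)=p$ explicitly. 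I expect $\pi$ to be determined blockwise by $D(p)$, each block between consecutive descents being a run of specified values. I then compute $\revcompl{\pi}$ and its Lehmer code, and verify that it satisfies the characterization of $\Max\lehmerpat{n}{3\underline{21}}$ from Section 4. An induction on $n$ following the Fibonacci recursions on both sides may organize this cleanly: the two recursive constructions (appending a block of length $1$ or $2$) should correspond under $\flip$ to prepending the mirrored block. This is the step I expect to be the main obstacle.

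\textbf{Step 3 (conclusion).} Once $q\in\Max\lehmerpat{n}{3\underline{21}}$ is known, the identity above gives $\descmap{3\underline{21}}(q)=\flip(\descmap{\underline{32}1}(p))$. Injectivity of $\descmap{3\underline{21}}$ then yields $q=\F(p)$, that is, $\F(p)=\revkompl{p}$. The weight identity $w(p)=w(\F(p))$ follows because reverse-complement preserves the inversion number.
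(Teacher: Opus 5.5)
Your reduction is sound, and two parts of it are cleaner than the paper's. The identity $D(\revkompl{p})=\flip(D(p))$ holds for every code, and it replaces the paper's detour through $\Rev(Z(p))=\lmax(\revcompl{\pi})$ and its case check on whether $t_1=t_2$. Your Step 1 is exactly Lemma~\ref{lem:descent_sets_reflect}.

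The gap is Step 2. That is where the whole content of the theorem lies, and you only describe it as a plan. Without it, Step 3 cannot invoke injectivity of $\descmap{3\underline{21}}$. The descent identity together with membership in $\lehmerpat{n}{3\underline{21}}$ cannot replace it, because non-maximal codes share descent sets with maximal ones. For example, $(1,0,0,0)$, $(2,0,0,0)$ and $(3,0,0,0)$ all lie in $\lehmerpat{4}{3\underline{21}}$ with descent set $\{1\}$. You also cannot borrow this step from the paper. Its proof applies Lemma~\ref{lem:decrease_to_lmax} to $t=\revkompl{p}$, then Lemma~\ref{lem:delta_3-21_bijection} to conclude $s=t$. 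Both lemmas presuppose $t\in\maxset{n}{3\underline{21}}$, which the paper never proves.

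The explicit computation you propose does go through with the paper's lemmas.
\begin{itemize}
\item Write $Z(p)=\{z_1<\cdots<z_\ell\}$ and $B_j=[d_{j-1}+2,d_j]$ with $d_0=-1$. By Lemma~\ref{lem:zero_set_entry_incresaing_by_1}, $\pi_{z_k}=k$.
\item By Lemma~\ref{lem:maximal_32_1_lehmer_condition}, $p\equiv n-d_j$ on $B_j$. So $\pi$ increases on $B_j$, and every entry of $B_j$ exceeds all $n-d_j$ entries to the right of $d_j$.
\item Hence, in $\tau=\revcompl{\pi}$, the positions $\Rev(Z(p))$ carry the values $n-\ell+1<\cdots<n$ from left to right. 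Each reflected block $\Rev(B_j)$ is an increasing run of values at most $n-\ell$, lying below every earlier entry of $\tau$.
\item Counting directly, $t_i$ is the total length of the reflected blocks lying entirely to the right of position $i$.
\item Write $\flip(D(p))=\{e_1<\cdots<e_m\}$ and $e_0=0$. Then $t$ is constant on each $[e_{j-1}+1,e_j]$ and zero on $[e_m+1,n]$.
\item By Corollary~\ref{cor:zero-set-32-1-avoiding}, $\Rev(Z(p))$ is $\flip(D(p))$ or $\flip(D(p))\cup\{1\}$. So Lemma~\ref{lem:lehmer_equation_lmax} gives $t_{e_j}+e_j=\tau_{e_j}=n-(m-j)$.
\item Together with $D(t)=\flip(D(p))$ and $e_1=n-d_m\in\{1,2\}$, these are conditions (a)--(c) of Lemma~\ref{lem:maximal_condition_3_21}. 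So $t$ is maximal, and your Step 3 finishes the proof.
\end{itemize}

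There is also an alternative route. Condition \eqref{eqn:condition_3_21}, together with weak increase between descents, shows that every $u\in\lehmerpat{n}{3\underline{21}}$ with $D(u)=\flip(D(p))$ satisfies $u\preceq\F(p)$. Then $t=\F(p)$ follows from $w(t)=w(p)$, once you verify the box count $w(\F(p))=w(p)$ directly.
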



For example, consider the permutation $\pi = 789156243 \in \permpat{9}{\underline{32}1}.$ 
Its reverse-complement is $\sigma := \revcompl{\pi} = \compl{(342651987)} = 768459123$.
Their Lehmer codes are $p:= L(\pi) = (6,6,6,0,3,3,0,1,0)$ and $s:= L(\sigma)=(6,5,5,3,3,3,0,0,0)$.
Lemmas \ref{lem:maximal_32_1_lehmer_condition} and \ref{lem:maximal_condition_3_21} below respectively show that $p \in \maxset{9}{\underline{32}1}$ and $s \in \maxset{9}{3\underline{21}}$. The  descent sets are $D(p)=\{3,6,8\}$ and $D(s)=\{1,3,6\}$. Theorem \ref{thm:gravity_bijection} holds for this example because $\flip(D(p)) = \{ 9-3, 9-6,9-8\} = \{1,3,6\} = D(s)$.

Furthermore, the bijection $\F$ between maximal codes has a natural geometric interpretation. Figure \ref{fig:reflect-gravity} shows the mapping, along with an intermediate configuration. Starting with $p \in \maxset{n}{\underline{32}1}$, we reflect through the line $x=y$. We then allow gravity to act on the floating boxes, resulting in the image $\F(p) \in \maxset{n}{3\underline{21}}$. Crucially, this ``flip and fall'' interpretation holds for the maximal codes, but does not hold in general: Figure \ref{fig:32-1-3-21-posets} shows that most pairs related by the reverse-complement map do not adhere to this geometric translation. 



\begin{figure}
 
\begin{center}
\begin{tikzpicture}[scale=0.7]

\newtri[(0,0)]{6,6,6,0,3,3,0,1}{teal}

\node at (0,-0.75) {(a)};

\node at (2.5,1.5) {$\rightarrow$};

\fliptri[(5,0)]{6,6,6,0,3,3,0,1}{teal}

\node at (5,-0.75) {(b)};

\node at (7.5,1.5) {$\rightarrow$};

\fliptri[(10,0)]{6,6,6,3,3,1,0,0}{teal}

\node at (10,-0.75) {(c)};

    
\end{tikzpicture}

\end{center}

\caption{The geometric ``flip and fall'' interpretation of the mapping $\F$. (a) Start with the triangle representation of $p \in \maxset{9}{\underline{32}1}$. (b) Reflect through the line $y=x$. (c) Allow floating boxes to fall down to obtain $\F(p) \in \maxset{9}{3\underline{21}}$.}

\label{fig:reflect-gravity}
   
\end{figure}





We close this section with a couple of reflective comments. 
First, because the reverse-complement mapping preserves the inversion number, it is clear that $\revkomplempty$ restricts to a mapping between maximum weight Lehmer codes $\Max_w \lehmerpat{n}{\underline{32}1}$ and $\Max_w \lehmerpat{n}{3\underline{21}}$. However, it is quite satisfying (and perhaps surprising) that $\revkomplempty$ restricts to a mapping between maximal Lehmer codes $\Max \lehmerpat{n}{\underline{32}1}$ and $\Max \lehmerpat{n}{3\underline{21}}$. This reveals a deep structural symmetry between the poset structures of $\lehmerpat{n}{\underline{32}1}$ and $\lehmerpat{n}{3\underline{21}}$.
Second, we note that Theorem 4.3 of \cite{BHR} gives a straightforward method that constructs the maximum weight codes of $\Max_w \lehmerpat{n}{\underline{32}1}$. We can now apply $\F$ to these Lehmer codes to construct the maximum weight codes of $\Max_w \lehmerpat{n}{3\underline{21}}$, resolving an open question from \cite{BHR}.

\subsection{Roadmap}

The paper is structured as follows. Section \ref{sec:prelim} contains some general results about Lehmer codes and permutations. Section \ref{sec:32-1-permutations} develops our understanding of the maximal Lehmer codes $\lehmerpat{n}{\underline{32}1}$ and their descent sets $\descpat{n}{\underline{32}1}$. Section \ref{sec:3-21-permutations} develops analogous results for ${\underline{32}1}$-avoiding permutations. Section \ref{sec:gravity-bijection} contains the proof of Theorem \ref{thm:gravity_bijection}, and we reflect on our results in Section \ref{sec:conc}.

We assume $n \geq 2$ throughout the paper, to avoid trivial arguments about $\perm{1}$.

%% file: section-preliminaries.tex
\section{Preliminaries}
\label{sec:prelim}

We prove some quick results about Lehmer codes for general permutations. 
First, we describe the Lehmer codes for the complement of a permutation and the reverse of a permutation. Then we make some observations about the left-to-right maxima of $\pi$ and the right-to-left minima of $\pi$. Along the way, we define the zero set of a Lehmer code.

\subsection{Decoding a Lehmer Code}

\label{sec:decode}

Given a Lehmer code $p \in \lehmer{n}$, we explain how to recover its corresponding permutation $\pi \in \perm{n}$. We start by creating a candidate list $C=(1,2,\ldots, n)$. We then process the entries of $p=(p_1, \ldots, p_n)$ from left to right, updating $C$ as we go. At step $i$, we skip over the $p_i$ smallest elements of $C$, setting $\pi_i$ to be the $(p_i+1)$ entry of $C$, which we also remove from list $C$. This guarantees that there will be exactly $p_i$ elements smaller than $\pi_i$ among the entries $i+1 \leq j \leq n$.

For example, suppose that we want to decode $p=(2,0,1,0) \in \lehmer{4}$. We start with $C=(1,2,3,4)$. We set $\pi_1=3$ (skipping over $p_1=2$ entries of $C$), and update $C=(1,2,4)$. Next, we set $\pi_2=1$ (skipping over $p_2=0$ entries of $C$), and update $C=(2,4)$. We then set $\pi_3=4$ (skipping over $p_3=1$ entries of $C$), and update $C=(2)$. Finally, we set $\pi_4=2$. So the corresponding permutation is $\pi=3142 \in \perm{4}$.

\subsection{Complement Permutation and Reverse Permutation}

Let permutation $\pi \in \perm{n}$ have Lehmer code $p \in \lehmer{n}$. We give formulas for the Lehmer codes for $\revlem{p} = L(\rev{\pi})$ and $\kompl{p} = L(\comp{\pi})$. 

 The Lehmer code $\kompl{p}$ for the complement $\comp{\pi}$ is simple to calculate.

\begin{lemma}
    \label{lem:complement_of_lehmer}
    Let $p \in \lehmer{n}$ be the Lehmer code of $\pi \in \perm{n}$, then the Lehmer code of $\compl{\pi}$ is
        $$
        \kompl{p}_i=(n-i)-p_i.
        $$
\end{lemma}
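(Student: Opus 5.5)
The plan is to prove the formula directly from the definition of the Lehmer code, by counting the complement of the defining set.

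Fix an index $i$ and write $\compl{\pi}_j = n+1-\pi_j$. By definition,
$$
\kompl{p}_i = \left|\{ j>i : \compl{\pi}_j < \compl{\pi}_i\}\right| = \left|\{ j>i : \pi_j > \pi_i\}\right|,
$$
since $n+1-\pi_j < n+1-\pi_i$ holds exactly when $\pi_j > \pi_i$. So the complement simply swaps which later entries are counted: smaller-than becomes larger-than.

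Next, the positions $j>i$ number exactly $n-i$. Because $\pi$ is a permutation, $\pi_j \neq \pi_i$ for every $j>i$. Hence these $n-i$ positions split into two disjoint sets: those with $\pi_j<\pi_i$, of which there are $p_i$ by definition, and those with $\pi_j>\pi_i$. Therefore
$$
\left|\{ j>i : \pi_j > \pi_i\}\right| = (n-i)-p_i,
$$
which is the claimed formula $\kompl{p}_i=(n-i)-p_i$.

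As a sanity check, $0\le (n-i)-p_i\le n-i$, so $\kompl{p}\in\lehmer{n}$, as it must be. There is no real obstacle here; the only point needing care is that ties $\pi_j=\pi_i$ cannot occur, and this is what makes the partition of the $n-i$ later positions exact.
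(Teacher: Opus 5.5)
Your proof is correct and follows the paper's argument: you rewrite the count for $\compl{\pi}$ as the number of later positions $j>i$ with $\pi_j>\pi_i$. Since there are no ties, the $n-i$ later positions split exactly into the $p_i$ smaller entries and the remaining $(n-i)-p_i$ larger ones.
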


\begin{proof}
    Let $q=L(\compl{\pi})$. By the definition of Lehmer code, for each $1\le i\le n$,
    \[
        p_i=|\{j>i:\pi_j < \pi_i\}|.
    \]
    Since $\compl{\pi}_i=n+1-\pi_i$, we have
    \[
        q_i = |\{j>i:\compl{\pi}_j  < \compl{\pi}_i \}| = |\{j>i:\pi_j  > \pi_i \}|.
    \]
    Among the $n-i$ indices $j>i$, exactly $p_i$ of them satisfy $\pi_j <\pi_i$. Since $\pi$ is a permutation, no equality can occur. Therefore the remaining $(n-i)-p_i$ indices satisfy $\pi_j >\pi_i$. Hence
    $
    q_i=(n-i)-p_i
    $
    for every $1\le i\le n$. Thus $q=\kompl{p}$, as desired.
\end{proof}


For example, when $\pi=52718364$ and $p=(4,1,4,0,3,0,1,0)$, 
we have $\compl{\pi}=47281635$, which has Lehmer code $L(\compl{\pi})=(3,5,1,4,0,2,0,0)$.
This matches the formula for $\kompl{p}$ in Lemma \ref{lem:complement_of_lehmer}.

The Lehmer code $\revlem{p}$ for the reverse $\rev{\pi}$ is more complicated: we require both $\pi$ and $p$.

\begin{lemma}
    \label{lem:reversal_of_lehmer}
    Let $p \in \lehmer{n}$ be the Lehmer code of $\pi \in \perm{n}$, then the Lehmer code of $\rev{\pi}$ is
        $$
        \revlem{p}_i = \pi_{n + 1-i} -1 -p_{n + 1-i}.
        $$
\end{lemma}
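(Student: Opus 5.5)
We prove the formula directly from the definition of the Lehmer code. We need to count, for each position $i$ of the reversed permutation, the entries to its right that are smaller. Write $\sigma = \rev{\pi}$, so that $\sigma_i = \pi_{n+1-i}$. The positions $j>i$ in $\sigma$ correspond exactly to the positions $k = n+1-j < n+1-i$ in $\pi$. Setting $m = n+1-i$, we get
$$
\revlem{p}_i = |\{j>i : \sigma_j < \sigma_i\}| = |\{k < m : \pi_k < \pi_m\}|.
$$
So the problem reduces to counting the entries of $\pi$ that lie to the \emph{left} of position $m$ and are smaller than $\pi_m$.

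Next, we count all entries of $\pi$ smaller than $\pi_m$. Since $\pi$ is a permutation of $[n]$, the values smaller than $\pi_m$ are exactly $1,\ldots,\pi_m-1$, so there are $\pi_m - 1$ of them. Each such value sits at a position other than $m$, so it lies either to the left of $m$ or to the right of $m$. By definition of the Lehmer code, exactly $p_m$ of them lie to the right of $m$. Therefore
$$
|\{k<m : \pi_k<\pi_m\}| = (\pi_m - 1) - p_m,
$$
and substituting $m = n+1-i$ gives $\revlem{p}_i = \pi_{n+1-i} - 1 - p_{n+1-i}$, as claimed.

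There is no real obstacle in this argument. The only point needing care is the index bookkeeping in the reversal, namely checking that $j>i$ corresponds to $k<m$ under $k = n+1-j$. As a sanity check before finishing, we would test the formula on the example $\pi = 52718364$ used for Lemma \ref{lem:complement_of_lehmer}. There $\rev{\pi} = 46381725$, and the formula gives $\revlem{p}_1 = \pi_8 - 1 - p_8 = 4 - 1 - 0 = 3$. This matches the three entries $3,1,2$ that appear after $4$ in $46381725$ and are smaller than it.
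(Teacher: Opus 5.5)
Your proof is correct and takes essentially the same approach as the paper's proof. Both reindex so that the count becomes the number of entries smaller than $\pi_{n+1-i}$ lying to its left in $\pi$, and both obtain this as the $\pi_{n+1-i}-1$ smaller values in total minus the $p_{n+1-i}$ of them lying to its right.
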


\begin{proof}
    Let $q=L(\rev{\pi})$. Fix $1\le i\le n$ and set $k=n+1-i$.
    Then
    \[
        q_i = |\{j>i:\rev{\pi}_j < \rev{\pi}_i\}| = |\{ \ell<k:\pi_{\ell} <\pi_k \}|.
    \]
    Thus $q_i$ counts the entries smaller than $\pi_k$ that appear to the left of position $k$ in $\pi$. Since there are $\pi_k-1$ entries smaller than $\pi_k$ in total, and since
    \[
        p_k=|\{ \ell > k:\pi_{\ell} < \pi_k \}|
    \]
    counts those appearing to the right of position $k$, we obtain
    \[
        q_i=\pi_k-1-p_k.
    \]
    This proves the claim.
\end{proof}


For example, when $\pi=52718364$ and $p=(4,1,4,0,3,0,1,0)$, we have
$\rev{\pi} = 46381725$, which has Lehmer code $L(\rev{\pi})=(3,4,2,4,0,2,0,0)$.
This matches the formula for $\revlem{p}$ in Lemma \ref{lem:reversal_of_lehmer}.

We note that since $\revlem{p}$ requires the explicit entries of $\pi$, so does $\revkompl{p}$. Our Theorem \ref{thm:gravity_bijection} shows that when restricting to maximal Lehmer codes, we have $\revkompl{p} = F(p)$, and we can cleanly calculate $F(p)$ directly from $p$ alone.

\subsection{Left-to-Right Maxima and Right-to-Left Minima}

We now turn to the left-to-right maxima and the right-to-left minima of $\pi$,  making observations about how they manifest in the Lehmer code for $\pi$. 

\begin{definition}
    The left-to-right maxima of $\pi$ is the index set
    $$
    \lmax(\pi) = \{ i \in [n] : \pi_i > \pi_j \mbox{ for } 1 \leq j < i \}.
    $$
\end{definition}

\begin{lemma}
\label{lem:lehmer_equation_lmax}
    Let $\pi = (\pi_1,\pi_2,\cdots,\pi_n) \in \perm{n}$ with Lehmer code $L(\pi) = p = (p_1,p_2,\cdots, p_n)$. If $i \in \lmax(\pi)$, then
        \[
        p_i + i = \pi_i.
        \]
\end{lemma}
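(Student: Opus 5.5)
Fix $i \in \lmax(\pi)$. The plan is to count the values smaller than $\pi_i$ by where they sit relative to position $i$, and then compare with $p_i$. There are exactly $\pi_i - 1$ values in $[n]$ smaller than $\pi_i$. Each occupies a position $j \neq i$, so each lies either to the left of $i$ ($j<i$) or to the right ($j>i$).

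First, I will use the hypothesis $i \in \lmax(\pi)$. It says $\pi_j < \pi_i$ for every $1 \leq j < i$, so all $i-1$ entries to the left of position $i$ are smaller than $\pi_i$. Hence the number of smaller values lying to the left of $i$ is exactly $i-1$.

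Second, by the definition of the Lehmer code, the number of smaller values lying to the right of $i$ is $p_i = |\{j>i : \pi_j < \pi_i\}|$.

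Adding the two counts gives $\pi_i - 1 = (i-1) + p_i$, which is $p_i + i = \pi_i$. Equivalently, this is the identity $\revlem{p}_{n+1-i} = \pi_i - 1 - p_i$ from Lemma \ref{lem:reversal_of_lehmer}, where the left count $\revlem{p}_{n+1-i}$ equals $i-1$ for a left-to-right maximum. There is no real obstacle here. The only step needing care is the observation that the left-to-right maximum condition makes every earlier entry smaller, so the left count is exactly $i-1$.
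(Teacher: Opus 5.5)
Your proposal is correct and takes the same approach as the paper's proof. Both arguments split the $\pi_i - 1$ values smaller than $\pi_i$ into the $i-1$ entries to the left of position $i$ (all smaller, since $i \in \lmax(\pi)$) and the $p_i$ entries to the right, which yields $p_i = \pi_i - i$; your closing remark linking this to Lemma~\ref{lem:reversal_of_lehmer} is a valid restatement of the same count.
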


\begin{proof}
Since $i \in \lmax(\pi)$, all $(i-1)$ entries before position $i$ are smaller than $\pi_i$. 
Therefore, the
number of entries smaller than $\pi_i$ that appear after position $i$ is
\[
p_i = (\pi_i-1)-(i-1)=\pi_i-i.
\]
Equivalently, $p_i+i = \pi_i$.
\end{proof}


For example, when $\pi=52718364\in\perm{8}$ and $p=(4,1,4,0,3,0,1,0)$, the left-to-right maxima of $\pi$ occur at positions $\lmax(\pi) = \{1, 3, 5\}$. For these positions, we have
$p_1+1 = 5 = \pi_1$ and $p_3+3 7 = \pi_3$ and $p_5+5 = 8 = \pi_5$, as promised by 
 Lemma \ref{lem:lehmer_equation_lmax}.

\begin{definition}
The right-to-left minima of $\pi \in \perm{n}$ is the index set
$$
\rmin(\pi) = \{ i: \pi_i < \pi_j \mbox{ for } i < j \leq n \}.
$$
\end{definition}

\begin{definition}
\label{def:zero-set}
    Let $p=(p_1,\ldots,p_n)\in\lehmer{n}$. The \emph{zero set} of $p$ is
    \[
        Z(p)= \{j\in[n]:p_j = 0\}.
    \]
    When $|Z(p)|=\ell$, we always write
    \[
     Z(p) = \{z_1,z_2,\ldots,z_{\ell} \}
    \]
    where the indices $z_1 < z_2 < \cdots < z_{\ell}$ are listed in increasing order.
\end{definition}

\begin{lemma}
    \label{lem:zero_equal_rmin}
    Let $\pi \in \perm{n}$ with Lehmer code $p \in \lehmer{n}$. Then 
    \[
    Z(p) = \rmin(\pi).
    \]
\end{lemma}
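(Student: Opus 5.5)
We prove the set equality $Z(p)=\rmin(\pi)$ by showing that each index lies in one set exactly when it lies in the other. Both conditions are statements about the entries to the right of position $i$, so the proof reduces to unwinding the definition of the Lehmer code.

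Fix $i \in [n]$. By definition, $p_i = |\{ j > i : \pi_j < \pi_i\}|$. Hence $p_i = 0$ holds exactly when no index $j>i$ satisfies $\pi_j < \pi_i$. Because $\pi$ is a permutation, $\pi_j \neq \pi_i$ for $j \neq i$. So ``no $j>i$ has $\pi_j<\pi_i$'' is equivalent to ``$\pi_i < \pi_j$ for every $j$ with $i<j\le n$.'' The latter is precisely the condition $i \in \rmin(\pi)$.

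Therefore $i \in Z(p)$ if and only if $i \in \rmin(\pi)$, which gives $Z(p)=\rmin(\pi)$. Note that $i=n$ belongs to both sets: $p_n=0$ always, and the condition defining $\rmin$ is vacuous at $i=n$.

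There is no real obstacle here. The only point needing care is invoking distinctness of the entries to turn a strict non-inequality into a strict inequality, which mirrors the argument in Lemma \ref{lem:complement_of_lehmer}.
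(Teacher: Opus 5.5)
Your proof is correct and follows the same route as the paper: unwind the definition of $p_i$ to see that $p_i=0$ exactly when no later entry is smaller than $\pi_i$, which is the condition $i\in\rmin(\pi)$. Your explicit appeal to the distinctness of the entries, and your check of the boundary case $i=n$, are small points of care that the paper's two-line proof leaves implicit.
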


\begin{proof}
    We have $p_i = 0$ if and only if there is no entry $j > i$ such that $\pi_j<\pi_i$. 
    This is exactly the condition that $\pi_i$ is a right-to-left minimum.
\end{proof}


For example, when $\pi=52718364$ and $p=(4,1,4,0,3,0,1,0)$,
the zero set of $p$ is
$
Z(p)=\{4,6,8\}.
$
Looking at the permutation, the entries at these positions are $\{1,3,4\}$ and these are 
precisely the entries that are smaller than
everything to their right, as specified by Lemma  \ref{lem:zero_equal_rmin}.

Finally, we make an important observation about the reverse-complement map, connecting the left-to-right maxima of $\revcompl{\pi}$ with the right-to-left minima of $\pi$.

\begin{definition}
    \label{def:rev}
    For any subset $I \subseteq [n]$, define 
    \[
    \Rev(I) = \{ n + 1 - i : i \in I \}.
    \]
\end{definition}

\begin{lemma}
    \label{lem:rmin_to_lmax}
    Let $\pi=(\pi_1,\ldots,\pi_n)\in\perm{n}$. Then
    \[
    \Rev(\rmin(\pi)) = \lmax(\revcompl{\pi}).
    \]
\end{lemma}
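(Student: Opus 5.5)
We prove the two inclusions directly from the definitions, tracking how the reverse and the complement each act on positions and on values. Set $\sigma = \revcompl{\pi}$. Since $\rev{\pi}_j = \pi_{n+1-j}$ and complementing replaces each value $v$ by $n+1-v$, we get
$$
\sigma_j = n+1-\pi_{n+1-j} \qquad \text{for } 1 \leq j \leq n.
$$
So position $j$ of $\sigma$ corresponds to position $i = n+1-j$ of $\pi$. Under this correspondence, positions to the left of $j$ in $\sigma$ correspond to positions to the right of $i$ in $\pi$, and the order of values is reversed.

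To see that $\Rev(\rmin(\pi)) \subseteq \lmax(\sigma)$, take $i \in \rmin(\pi)$ and set $j = n+1-i$.
\begin{itemize}
\item Any index $k < j$ in $\sigma$ has the form $k = n+1-m$ with $m > i$.
\item Since $i \in \rmin(\pi)$, we have $\pi_i < \pi_m$.
\item Therefore $\sigma_j = n+1-\pi_i > n+1-\pi_m = \sigma_k$.
\end{itemize}
This holds for every $k < j$, so $j \in \lmax(\sigma)$.

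The reverse inclusion follows by running the same chain of equivalences backwards. If $j \in \lmax(\sigma)$, set $i = n+1-j$. For every $m > i$, the index $k = n+1-m$ satisfies $k < j$, so $\sigma_k < \sigma_j$. Translating back gives $\pi_m > \pi_i$, hence $i \in \rmin(\pi)$ and $j = n+1-i \in \Rev(\rmin(\pi))$.

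The only point needing care is the bookkeeping of indices: the condition $m > i$ must translate exactly to $k < j$ under $k = n+1-m$, which it does, and the comparison of values must flip under complementation. There is no substantive obstacle beyond this.
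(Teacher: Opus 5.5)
Your proof is correct and follows essentially the same route as the paper, which tracks the point correspondence $(i,\pi_i)\mapsto(n+1-i,\,n+1-\pi_i)$ under reverse-complement and checks the inclusion $\Rev(\rmin(\pi))\subseteq\lmax(\revcompl{\pi})$ directly. The only difference is that you write out the reverse inclusion explicitly, whereas the paper just says the same argument applied backward gives it.
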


\begin{proof}
    The reverse-complement mapping $\pi \mapsto \revcompl{\pi}$ sends the point $(i,\pi_i)$ in the plot of $\pi$ to
        $  
        (n+1-i,n+1-\pi_i)
        $
    in the plot of $\revcompl{\pi}$.
    If $i\in\rmin(\pi)$, then $\pi_i<\pi_j$ for every $j>i$. After applying the reverse-complement to each $(j,\pi_j)$, every such point lies to the left of position $n+1-i$ and has smaller value than $n+1-\pi_i$. Hence $n+1-i\in\lmax(\revcompl{\pi})$.
    
    The same argument applied backward gives the reverse inclusion.
\end{proof}




For example, when 
$\pi=52718364$, we have $\rmin(\pi)=\{4,6,8\}$, and hence 
$\Rev(\rmin(\pi)) = \{1,3,5\}.
$
On the other hand,
$\revcompl{\pi}=53618274$, and its left-to-right maxima occur at positions
$
\lmax(\revcompl{\pi})=\{1,3,5\},
$
in accordance with Lemma \ref{lem:rmin_to_lmax}.

%% file: section-32-1.tex
\section{Lehmer Codes for \underline{32}1-Avoiding Permutations}

\label{sec:32-1-permutations}

We turn our attention to the family of $\underline{32}1$-avoiding permutations, whose
Lehmer-code characterization and maximality criterion were established in
\cite{BHR}. We reformulate that criterion into a
form that is better suited for our proof of Theorem \ref{thm:gravity_bijection}. We then show that each maximal Lehmer code in this family is uniquely determined by its zero set, and  translate this zero set description into an equivalent descent set description. Next, we show that $\maxset{n}{\underline{32}1}$ is enumerated by the Fibonacci numbers.  We close with a lemma that specifies the permutation values on the zero set of $p \in \maxset{n}{\underline{32}1}$. 

We start with the characterization of the Lehmer codes for $\underline{32}1$-avoiding permutations from $\cite{BHR}$. We include its short proof for completeness. See Figure
\ref{fig:32-1-3-21-posets} for a visualization of $\lehmerpat{4}{\underline{32}1}$.

\begin{lemma}[\cite{BHR}, Lemma 2.3]
    \label{lem:condition_32_1}
    The Lehmer code $p=(p_1,\ldots,p_n) \in \lehmerpat{n}{\underline{32}1}$ if and only if the following condition holds: 
    \begin{equation}
    \label{eqn:condition_32_1}
            \mbox{if } j \in D(p) \mbox{ then } p_{j+1}=0.
    \end{equation}
\end{lemma}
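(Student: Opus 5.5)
The plan is to translate the vincular condition directly into a statement about adjacent Lehmer code entries. A permutation $\pi$ contains $\underline{32}1$ exactly when there is an index $j$ with $\pi_j > \pi_{j+1}$ and some $k > j+1$ with $\pi_k < \pi_{j+1}$. So I will prove the equivalent statement: $\pi$ contains $\underline{32}1$ if and only if there is a $j$ with $j \in D(p)$ and $p_{j+1} > 0$.

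The first key step is the observation that adjacent Lehmer entries detect adjacent descents of $\pi$. Specifically, $\pi_j > \pi_{j+1}$ if and only if $p_j > p_{j+1}$. To prove this, compare the sets $\{k > j : \pi_k < \pi_j\}$ and $\{k > j+1 : \pi_k < \pi_{j+1}\}$.
\begin{itemize}
\item If $\pi_j > \pi_{j+1}$, then every $k>j+1$ with $\pi_k<\pi_{j+1}$ also satisfies $\pi_k<\pi_j$. Index $j+1$ itself is counted in $p_j$ as well. Hence $p_j \ge p_{j+1}+1$.
\item If $\pi_j < \pi_{j+1}$, then every $k>j$ with $\pi_k<\pi_j$ satisfies $k>j+1$ and $\pi_k<\pi_{j+1}$. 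Hence $p_j \le p_{j+1}$.
\end{itemize}
So $D(p)$ coincides with the descent set of $\pi$.

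The second key step uses the definition $p_{j+1} = |\{k>j+1 : \pi_k<\pi_{j+1}\}|$. This says $p_{j+1}>0$ exactly when some later entry lies below $\pi_{j+1}$. Equivalently, by Lemma \ref{lem:zero_equal_rmin}, $p_{j+1}=0$ exactly when $j+1 \in \rmin(\pi)$.

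Combining the two steps, an occurrence of $\underline{32}1$ at the adjacent pair $(j,j+1)$ is precisely a descent $j \in D(p)$ with $p_{j+1} \neq 0$. Negating this gives condition \eqref{eqn:condition_32_1}. Both directions follow at once, since each step is an equivalence.

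The only mildly delicate point is the descent equivalence in the first step, specifically the strict inequality $p_j > p_{j+1}$ when $\pi_j>\pi_{j+1}$. It holds because the index $j+1$ contributes to $p_j$ but not to $p_{j+1}$. Everything else is routine unpacking of definitions.
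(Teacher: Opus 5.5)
Your proof is correct and follows essentially the same route as the paper: both reduce the pattern condition to the equivalence $\pi_j>\pi_{j+1} \iff p_j>p_{j+1}$, together with the fact that $p_{j+1}=0$ exactly when $\pi_{j+1}$ lies below every later entry. Your version is slightly more complete, since you prove the descent equivalence that the paper only states as an observation.
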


\begin{proof}
For $1 \leq i \leq n-1$, observe that $\pi_j > \pi_{j+1}$ if and only if $p_j > p_{j+1}$.
Suppose that $p$ follows condition \eqref{eqn:condition_32_1}.  Suppose $\pi_j > \pi_{j+1}$, so that  $j \in D(p)$ and therefore $p_{j+1}=0$. So $\pi_{j+1} < \pi_k$ for $j+1 < k \leq n$, which means that $j$ cannot be the start of a $\underline{32}1$-pattern.
Now consider $\pi \in \permpat{n}{\underline{32}1}$, and suppose that $j \in D(p)$, so that $\pi_{j} > \pi_{j+1}$ as well. To avoid creating a $\underline{32}1$-pattern, we must have $\pi_{j+1} > \pi_k$ for $j+1 < k \leq n$, which means that $p_{j+1}=0$.
\end{proof}

\begin{cor}
\label{cor:32-1-descent-increase-by-2}
If $D(p) = \{ d_1, \ldots, d_m \}$ is the descent set of $p \in \lehmerpat{n}{\underline{32}1}$, then $d_{i+1}-d_i \geq 2$.
\end{cor}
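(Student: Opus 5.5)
The corollary follows from Lemma \ref{lem:condition_32_1} by a direct contradiction argument. Suppose two consecutive descents are adjacent, that is, $d_{i+1} = d_i + 1$ for some $i$. Write $j = d_i$, so both $j$ and $j+1$ lie in $D(p)$.

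First, since $j \in D(p)$, condition \eqref{eqn:condition_32_1} gives $p_{j+1} = 0$.

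Second, since $j+1 \in D(p)$, the definition of the descent set gives $p_{j+1} > p_{j+2}$. Every Lehmer code entry is nonnegative, so $p_{j+2} \geq 0$. Hence $p_{j+1} > 0$, which contradicts $p_{j+1} = 0$.

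Therefore no two descents are consecutive integers. Because the $d_i$ are listed in strictly increasing order, $d_{i+1} - d_i \geq 1$ already holds, and we have just ruled out equality; so $d_{i+1} - d_i \geq 2$.

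No step here is a genuine obstacle. The only point needing care is that the contradiction uses nonnegativity of the entry $p_{j+2}$, which holds because $j+2 \leq n$ whenever $j+1 \in D(p) \subseteq [n-1]$.
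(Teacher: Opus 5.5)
Your proof is correct and is the same as the paper's: a descent at $j$ forces $p_{j+1}=0$ by Lemma \ref{lem:condition_32_1}, and a zero entry cannot exceed the nonnegative entry $p_{j+2}$, so $j+1$ is not a descent. You simply spell out the nonnegativity step that the paper leaves implicit.
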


\begin{proof}
If $p$ has a descent at index $j$, then $p_{j+1}=0$, so index $j+1$ cannot be a descent.
\end{proof}

The maximal Lehmer codes have the following form. This criterion is different from the one found in Lemma 2.7 of \cite{BHR}, but it is straightforward to show that they are equivalent. Figure \ref{fig:32-1-example} shows the Lehmer codes of $\maxset{5}{\underline{32}1}$ along with their corresponding permutations, descents sets and zero sets.

\begin{figure}[ht]
\begin{center}
\begin{tikzpicture}[scale=.6]

\begin{scope}

    \node at (-3,2.5) {\footnotesize $\pi$};
    \node at (-3,0.5) {\footnotesize $p$};
    \node at (-3,-1) {\footnotesize $D(p)$};
    \node at (-3,-2.5) {\footnotesize $Z(p)$};

        \node at (0,2.5) {\footnotesize $51423$};
        \newtri[(0, 0)]{4,0,2,0}{orange};
        \node at (0,-1) {\footnotesize $\{1,3\}$};
        \node at (0,-2.5) {\footnotesize $\{2,4,5\}$};
        
        \node at (3,2.5) {\footnotesize $51342$};
        \newtri[(3, 0)]{4,0,1,1}{orange};
        \node at (3,-1) {\footnotesize $\{1,4\}$};
        \node at (3,-2.5) {\footnotesize $\{2,5\}$};        

        \node at (6,2.5) {\footnotesize $45132$};
        \newtri[(6, 0)]{3,3,0,1}{orange};
        \node at (6,-1) {\footnotesize $\{2,4\}$};
        \node at (6,-2.5) {\footnotesize $\{3,5\}$};        

        \node at (9,2.5) {\footnotesize $34512$};        
        \newtri[(9, 0)]{2,2,2,0}{orange};
        \node at (9,-1) {\footnotesize $\{3\}$};
        \node at (9,-2.5) {\footnotesize $\{4,5\}$};
        
        \node at (12,2.5) {\footnotesize $23451$};        
        \newtri[(12, 0)]{1,1,1,1}{orange};
        \node at (12,-1) {\footnotesize $\{4\}$};
        \node at (12,-2.5) {\footnotesize $\{5\}$};        

\end{scope}

\end{tikzpicture}    

\caption{The maximal Lehmer codes $\maxset{5}{\underline{32}1}$. The triangle representation of each code $p$ is accompanied by its corresponding permutation $\pi$,  descent set $D(p)$ and zero set $Z(p)$.}

\label{fig:32-1-example}

\end{center}    
\end{figure}


\begin{lemma}
\label{lem:maximal_32_1_lehmer_condition}
The Lehmer code $p \in \maxset{n}{\underline{32}1}$ if and only if the following conditions hold for $p$ and its descent set $D(p) = \{ d_1, \ldots, d_m \}$.
\begin{enumerate}[(a)]
    \item The last descent $d_m \in \{ n-2, n-1 \}$.
    \item For $1\le j\le m$, we have $p_{d_j} = n-d_j$ and $p_{d_j+1} = 0$. 
    \item The code $p$ is constant (and nonzero) on the subintervals
    \[
    [1,d_1], \quad [d_1+2,d_2], \quad [d_2+2,d_3], \quad  \ldots, \quad [d_{m-1}+2, d_m]
    \]
    and zero elsewhere.
    
\end{enumerate}
\end{lemma}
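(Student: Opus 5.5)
The plan is to prove the two directions separately. Throughout, the only tools are the characterization in Lemma~\ref{lem:condition_32_1} (a code lies in $\lehmerpat{n}{\underline{32}1}$ iff every descent $j$ has $p_{j+1}=0$) and the box constraint $p_i \le n-i$. Maximality is tested by trying to raise one or more coordinates.

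\emph{Sufficiency.} First check that a code $p$ satisfying (a)--(c) lies in $\lehmerpat{n}{\underline{32}1}$. On each block $[d_{j-1}+2,d_j]$ the code is constant, equal to $n-d_j$ by (b). It then drops to $0$ at $d_j+1$ and rises again to the positive value of the next block. So the only descents are the $d_j$, and each is followed by a zero.

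Now suppose $q \succeq p$ with $q\in\lehmerpat{n}{\underline{32}1}$ and $q\neq p$. We split by where a strict increase $q_i>p_i$ could occur.
\begin{itemize}
\item \emph{At $i=d_j$.} This is impossible, since $p_{d_j}=n-d_j$ is already the largest allowed value.
\item \emph{At $i=d_j+1$.} Then $q_{d_j}=n-d_j> n-d_j-1\ge q_{d_j+1}$, so $d_j$ is a descent of $q$. Lemma~\ref{lem:condition_32_1} then forces $q_{d_j+1}=0$, a contradiction.
\item \emph{Inside a block, at $i<d_j$.} Take the largest such $i$ in that block. Then $q_{i+1}=p_{i+1}=n-d_j$, because either $i+1$ is unchanged or $i+1=d_j$, which is already saturated. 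Hence $i$ is a descent of $q$ while $q_{i+1}\neq 0$, again contradicting Lemma~\ref{lem:condition_32_1}.
\item \emph{At a position in ``zero elsewhere''.} By (a), these positions are exactly the $d_j+1$ together with $n$. Position $n$ is forced to be $0$.
\end{itemize}
Hence $p$ is maximal.

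\emph{Necessity.} Let $p$ be maximal. We establish local properties by showing that each failure permits a single coordinate to be raised while staying in $\lehmerpat{n}{\underline{32}1}$.
\begin{enumerate}[(i)]
\item \emph{Descents are saturated.} Let $j$ be a descent. Then $p_j>0$, so $j-1$ is not a descent. Raising $p_j$ keeps $j$ a descent, still followed by a zero, and creates no new descent at $j-1$ that would need $p_j=0$. Hence $p_j=n-j$, and $p_{j+1}=0$ by Lemma~\ref{lem:condition_32_1}. This gives (b).
\item \emph{No strict ascent from a positive entry.} Suppose $0<p_i<p_{i+1}$. Then $i-1$ is not a descent, since that would force $p_i=0$. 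Raising $p_i$ by one creates no descent at $i$. So on nonzero stretches the code is weakly increasing, and between descents it is constant.
\item \emph{Zeros occur only at $n$ and right after descents.} Suppose $p_i=0$ with $i<n$ and $i-1$ not a descent. Then $p_{i-1}=0$ (or $i=1$). Raise $p_i$ to $1$. If $p_{i+1}\ge 1$, no descent is created at $i$. If $p_{i+1}=0$, a descent is created at $i$, but it is followed by a zero, which is allowed. Position $i-1$ does not become a descent. This contradicts maximality.
\end{enumerate}
Together, (i)--(iii) give the block structure (c). Each block of nonzero entries is constant and must end in a descent, since a block cannot run into a zero without descending there. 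For (a), if $d_m\le n-3$, then positions $d_m+2,\dots,n-1$ are nonzero non-descents, yet $p_n=0$ forces a descent at $n-1$. This is a contradiction.

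\emph{Main obstacle.} I expect step (iii) to be the delicate one. Raising a zero can create a new descent, and one must check carefully that this descent is still followed by a zero, including the boundary cases $i=1$ and $i=n-1$. Existence of at least one descent also needs a brief check using $n\ge2$.
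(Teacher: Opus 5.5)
Your proof is correct and follows essentially the same route as the paper. Necessity comes from raising a single coordinate to contradict maximality (saturating descents, forbidding strict ascents from positive entries, ruling out zeros not preceded by a descent), and sufficiency from checking that no coordinate of a dominating code can increase. Your zero-placement step (iii), which also rules out $p_1=0$, and your choice of the largest increased index inside a block are somewhat more careful than the corresponding steps in the paper.
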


\begin{proof}
By Lemma~\ref{lem:condition_32_1}, a code $p \in \lehmerpat{n}{\underline{32}1}$ if and only if every descent is followed by a zero. 
Suppose first that $p\in\maxset{n}{\underline{32}1}$. 

If $d\in D(p)$, then $p_{d+1}=0$ by Lemma~\ref{lem:condition_32_1}. Maximality forces $p_d=n-d$: otherwise we could increase $p_d$ and $d$ would still be a descent followed by zero. This proves (b).

Next, we observe that since $p_n=0$, maximality forces a descent at either $n-1$ or $n-2$. Indeed, if $p_{n-1}>0$, then $n-1\in D(p)$. If $p_{n-1}=0$, then maximality forces $p_{n-2}=2$, so $n-2\in D(p)$. Hence $d_m\in\{n-2,n-1\}$, proving (a).

Finally, by Corollary \ref{cor:32-1-descent-increase-by-2}, the descents of $p$ are not consecutive. We claim that the only location where we can have consecutive zeros is indices $n-1$ and $n$.
Indeed, if $p_i = p_{i+1} = p_{i+2} = 0$ then we can increase $p_{i+1}$ to obtain another code in $\lehmerpat{n}{\underline{32}1}$, a contradiction. Otherwise, if $p_i=p_{i+1}=0$ and $p_{i+1}>0$ then we can increase $p_{i}$ to equal $p_{i+1}$ to obtain another code in $\lehmerpat{n}{\underline{32}1}$, a contradiction.
Therefore the positive entries occur in blocks
\[
[1,d_1],\ [d_1+2,d_2],\ldots,\ [d_{m-1}+2,d_m],
\]
and each such block is weakly increasing. Maximality forces each block to be constant: if $p_i<p_{i+1}$ inside a block, then increasing $p_i$ to $p_{i+1}$ would create no new descent and would keep the code $\underline{32}1$-avoiding, as per Lemma \ref{lem:condition_32_1}. Thus (c) holds.

Conversely, suppose that (a), (b), and (c) hold for code $p \in \lehmer{n}$. Then every descent of $p$ is one of the indices $d_j$, and it is followed by a zero. Hence $p\in\lehmerpat{n}{\underline{32}1}$ by Lemma~\ref{lem:condition_32_1}.

It remains to show maximality. Let $s\in\lehmerpat{n}{\underline{32}1}$ with $p\preceq s$. 
Suppose that $d \in D(s)$ and $s_{d+1} > 0 = p_{d+1}$. Then we still have $s_{d} = p_{d} = n-d > s_{d+1}$, so
$d \in D(s)$ but $s_{d+1} \neq 0$, contradicting equation \eqref{eqn:condition_32_1}.
Thus $s_{d+1} = 0$ for all $d \in D(p)$.
On each positive block of $p$, the right endpoint value is already $p_{d_j} = n-d_j$, the largest possible value at position $d_j$, and the block is constant; increasing any earlier entry in the block would create a descent not followed by zero. Therefore no coordinate of $p$ can be increased while remaining in $\lehmerpat{n}{\underline{32}1}$. Hence $p=s$, so $p\in\maxset{n}{\underline{32}1}$.
\end{proof}

\begin{cor}
\label{cor:zero-set-32-1-avoiding}
Let $p\in \maxset{n}{\underline{32}1}$ with descent set
$D(p) = \{ d_1, d_2, \ldots, d_m \}$. Then
$$
Z(p) =
\begin{cases}
    D'(p) & \mbox{if } d_m = n-1, \\
    D'(p) \cup \{ n \} &  \mbox{if } d_m = n-2.
\end{cases}
$$
where $D'(p) = \{ d_i + 1 : 1 \leq i \leq m \}$.
\end{cor}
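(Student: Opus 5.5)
The plan is to read $Z(p)$ directly off the structural description in Lemma \ref{lem:maximal_32_1_lehmer_condition}. By part (c), $p$ is positive on each of the blocks
\[
[1,d_1],\ [d_1+2,d_2],\ \ldots,\ [d_{m-1}+2,d_m],
\]
and zero everywhere else. Therefore $Z(p)$ is exactly the complement in $[n]$ of the union of these blocks.

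First I would identify what that complement is. The blocks cover $[1,d_m]$ except for the isolated gap positions $d_1+1, d_2+1, \ldots, d_{m-1}+1$. So the complement consists of those gaps together with the tail interval $[d_m+1, n]$.

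Next I would split on the two cases allowed by part (a).

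\begin{enumerate}[(i)]
\item If $d_m = n-1$, the tail is $\{n\} = \{d_m+1\}$. Hence $Z(p) = \{d_i+1 : 1 \le i \le m\} = D'(p)$.
\item If $d_m = n-2$, the tail is $\{n-1, n\} = \{d_m+1, n\}$. Hence $Z(p) = D'(p) \cup \{n\}$.
\end{enumerate}

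As a consistency check, part (b) independently gives $p_{d_j+1}=0$ for every $j$. The boundary fact $p_n = 0$ holds for every Lehmer code, which also matches.

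The only point needing care is confirming that the blocks really are nonempty and non-overlapping, so that no other zeros appear. Corollary \ref{cor:32-1-descent-increase-by-2} gives $d_{i+1} \ge d_i+2$, so each interval $[d_i+2, d_{i+1}]$ is nonempty. Part (c) says $p$ is nonzero on these intervals, so no zeros hide inside them. Beyond this, I expect no real obstacle: the statement is a bookkeeping consequence of the lemma.
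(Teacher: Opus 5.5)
Your proof is correct, and it draws on the same source as the paper, Lemma~\ref{lem:maximal_32_1_lehmer_condition}, but relies on a different part of it. The paper's proof uses only part (b) to get $D'(p)\subseteq Z(p)$ and then notes that $p_n=0$. That is exactly your ``consistency check,'' and it establishes only the inclusion $Z(p)\supseteq D'(p)$ (resp.\ $Z(p)\supseteq D'(p)\cup\{n\}$). The reverse inclusion, that no other entries of $p$ vanish, is left implicit in the paper; it depends on the ``nonzero on the blocks, zero elsewhere'' clause of part (c). You read $Z(p)$ off as the complement of the union of the positive blocks, using Corollary~\ref{cor:32-1-descent-increase-by-2} to see that the gaps $d_i+1$ are isolated and the blocks are nonempty. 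This gives both inclusions at once, so your version is the more complete of the two.
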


\begin{proof}
By condition (b), we have $D'(p) \subseteq Z(p)$. 
In addition, we always have $p_n =0$; this index is included in $D'(p)$ when $d_m=n-1$.  Otherwise, we have $d_m=n-2$ and we must explicitly add index $n$ to our set of zeros.
\end{proof}

\begin{cor}
\label{cor:zero_set_determines_maximal_32-1}
A code $p \in \maxset{n}{\underline{32}1}$ is uniquely determined by its zero set $Z(p)$.
\end{cor}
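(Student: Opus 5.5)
The plan is to show that $Z(p)$ determines the descent set $D(p)$. Once $D(p)$ is known, Lemma~\ref{lem:maximal_32_1_lehmer_condition} fixes every entry of $p$. By parts (b) and (c), each positive block $[d_{j-1}+2,d_j]$ (with $d_0+2$ read as $1$) is constant with value $p_{d_j}=n-d_j$. Every index outside these blocks carries a zero. So $p$ is an explicit function of $D(p)$, and it suffices to recover $D(p)$ from $Z(p)$.

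To recover $D(p)$, I will use Corollary~\ref{cor:zero-set-32-1-avoiding}, which gives $Z(p)=D'(p)$ or $Z(p)=D'(p)\cup\{n\}$, where $D'(p)=\{d_i+1\}$. The two cases are distinguished by $Z(p)$ itself, according to whether $n-1\in Z(p)$.

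\emph{Case $n-1\in Z(p)$.} Here $p_{n-1}=0$, so $n-1$ cannot be a descent, since a descent at $n-1$ would require $p_{n-1}>p_n=0$. Part (a) then forces $d_m=n-2$. We are in the second case of the corollary, and $D(p)=\{z-1: z\in Z(p),\ z\neq n\}$.

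\emph{Case $n-1\notin Z(p)$.} Then $p_{n-1}>0=p_n$, so $d_m=n-1$. We are in the first case, and $D(p)=\{z-1:z\in Z(p)\}$.

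In either case $D(p)$ is a function of $Z(p)$ alone. Hence two maximal codes with the same zero set have the same descent set, and therefore coincide by the reconstruction in the first paragraph.

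The only subtle point is whether the index $n$ belongs to $D'(p)$ or is an extra zero. This is exactly what the case split on $n-1$ handles. One should also check that the blocks in (c) are nonempty and well defined, which holds because $d_{i+1}-d_i\ge 2$ by Corollary~\ref{cor:32-1-descent-increase-by-2}. Beyond that the argument is routine.
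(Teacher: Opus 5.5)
Your proof is correct and is essentially the paper's argument: the paper reads $p$ off $Z(p)$ in one line, setting $p_k = n-d_i$ for $z_{i-1}<k<z_i$ with $d_i = z_i-1$, which is your passage $Z(p)\to D(p)\to p$ compressed into a single step. Your explicit case split on whether $n-1\in Z(p)$ spells out what the paper leaves implicit (the possible extra zero at $n$ only closes off an empty gap after the zero at $n-1$). Your block value $n-d_j=n-z_j+1$ also shows that the paper's displayed $n-z_i-1$ is a sign slip.
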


\begin{proof}
Consider $p \in \maxset{n}{\underline{32}1}$ with zero set
$Z(p) = \{ z_1, z_2, \ldots z_{\ell} \}$.
Note that $z_{\ell}=n$, so $Z(p) \neq \emptyset$. Set $z_0=0$ for convenience. By Lemma \ref{lem:maximal_32_1_lehmer_condition}, we have $p_k = n-d_i = n - z_i -1$ for $z_{i-1} < k < z_i$. This completely determines the remaining entries of $p$.
\end{proof}

We spend the remainder of this section discussing the descent set $D(p)$ and the zero set $Z(p)$ for $p \in \maxset{n}{\underline{32}1}$.
Recall from Definition \ref{def:descent-set-map} that
$\descpat{n}{\underline{32}1} = \{ D(p) : p \in \maxset{n}{\underline{32}1} \}$ is the collection of all descent sets for Lehmer codes in $\maxset{n}{\underline{32}1}$, and that     
the mapping
$
\descmap{\underline{32}1}: \maxset{n}{\underline{32}1} \rightarrow \descpat{n}{\underline{32}1}
$
is given by $\descmap{\underline{32}1}(p) = D(p)$.

\begin{lemma}
\label{lem:delta_32-1_bijection}
    The mapping
    $\descmap{\underline{32}1}: \maxset{n}{\underline{32}1} \rightarrow \descpat{n}{\underline{32}1}$ is a bijection
\end{lemma}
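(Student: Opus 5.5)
Surjectivity of $\descmap{\underline{32}1}$ onto $\descpat{n}{\underline{32}1}$ holds by the very definition of $\descpat{n}{\underline{32}1}$ as the image of $\descmap{\underline{32}1}$. So the whole task is injectivity: we must show that a maximal code $p \in \maxset{n}{\underline{32}1}$ can be recovered from its descent set $D(p)$ alone.

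We will deduce this from the two corollaries just proved rather than redo the structural analysis.

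\textbf{Step 1: recover $Z(p)$ from $D(p)$.} Let $D(p)=\{d_1,\ldots,d_m\}$. Condition (a) of Lemma~\ref{lem:maximal_32_1_lehmer_condition} gives $d_m\in\{n-2,n-1\}$, so we can read off which case we are in by inspecting the largest element of $D(p)$. Corollary~\ref{cor:zero-set-32-1-avoiding} then gives $Z(p)$ explicitly:
\begin{itemize}
\item[] $Z(p)=D'(p)$ if $d_m=n-1$;
\item[] $Z(p)=D'(p)\cup\{n\}$ if $d_m=n-2$;
\end{itemize}
where $D'(p)=\{d_i+1\}$. Hence $Z(p)$ is a function of $D(p)$ only.

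\textbf{Step 2: recover $p$ from $Z(p)$.} Corollary~\ref{cor:zero_set_determines_maximal_32-1} says a maximal code is determined by its zero set.

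\textbf{Conclusion.} If $p,q\in\maxset{n}{\underline{32}1}$ satisfy $D(p)=D(q)$, then Step 1 gives $Z(p)=Z(q)$, and Step 2 gives $p=q$.

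As a sanity check, and to avoid relying on the slightly terse proof of Corollary~\ref{cor:zero_set_determines_maximal_32-1}, we could instead reconstruct $p$ directly from $D(p)$ using Lemma~\ref{lem:maximal_32_1_lehmer_condition}(b),(c):
\begin{itemize}
\item[] $p_k=n-d_j$ for $k$ in the block $[d_{j-1}+2,d_j]$, with the convention $d_0=-1$;
\item[] $p_k=0$ at every other position.
\end{itemize}
This explicit formula depends only on $D(p)$, which gives injectivity immediately.

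The only point needing care is that the blocks in (c) are nonempty and well defined, i.e.\ $d_{j}-d_{j-1}\ge 2$ and $d_1\ge 1$. This follows from Corollary~\ref{cor:32-1-descent-increase-by-2}, so I expect no real obstacle.
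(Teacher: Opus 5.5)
Your proof is correct and is essentially the paper's argument: surjectivity holds by definition, and injectivity follows by recovering $Z(p)$ from $D(p)$ via Corollary~\ref{cor:zero-set-32-1-avoiding} and then $p$ from $Z(p)$ via Corollary~\ref{cor:zero_set_determines_maximal_32-1}. Your backup reconstruction directly from Lemma~\ref{lem:maximal_32_1_lehmer_condition}(b),(c) is also valid, and it avoids relying on that corollary's proof, whose stated value $n-z_i-1$ should read $n-z_i+1$.
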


\begin{proof}
The map is surjective by the definition of $\descpat{n}{\underline{32}1}$.
It remains to prove injectivity.
Let $p,s\in\maxset{n}{\underline{32}1}$ and suppose $D(p)=D(s)$.
By Corollary \ref{cor:zero-set-32-1-avoiding}, we have $Z(p) = Z(s)$ and then by Corollary \ref{cor:zero_set_determines_maximal_32-1}, we have $p=s$. 
\end{proof}


\begin{lemma}
\label{lem:descent_sets_32_1_characterization}
    For $n \ge 2$,
    \[
    \descpat{n}{\underline{32}1} = \{ \{d_1, \cdots, d_m\}\subseteq [n-1]:
     d_m\in\{n-2,n-1\} \mbox{ and } d_{j+1}-d_j\ge 2 \}.
    \]
\end{lemma}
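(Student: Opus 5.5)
We prove the equality by showing inclusion in each direction, using Lemma \ref{lem:maximal_32_1_lehmer_condition} as the main tool.

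\emph{Forward inclusion.} Let $p \in \maxset{n}{\underline{32}1}$ with $D(p)=\{d_1,\ldots,d_m\}$. First note $D(p)\neq\emptyset$: condition (a) of Lemma \ref{lem:maximal_32_1_lehmer_condition} already asserts that a last descent exists. Its proof shows this directly, since either $p_{n-1}>0=p_n$ or $p_{n-2}=2>0=p_{n-1}$. Condition (a) then gives $d_m\in\{n-2,n-1\}$. Corollary \ref{cor:32-1-descent-increase-by-2} gives $d_{j+1}-d_j\ge 2$. Hence $D(p)$ lies in the right-hand set.

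\emph{Reverse inclusion.} Let $D=\{d_1<\cdots<d_m\}\subseteq[n-1]$ satisfy $d_m\in\{n-2,n-1\}$ and $d_{j+1}-d_j\ge2$. Define $p$ by setting $p_k=n-d_j$ for $k$ in the block $[d_{j-1}+2,d_j]$ (with the convention $d_0=-1$, so the first block is $[1,d_1]$), and $p_k=0$ elsewhere. The zero positions are then exactly $d_j+1$ for $1\le j\le m$, together with $n$ when $d_m=n-2$.

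We check the following points in turn.
\begin{enumerate}[(i)]
\item $p\in\lehmer{n}$: for $k\le d_j$ we have $p_k=n-d_j\le n-k$. Each block is nonempty because $d_j\ge d_{j-1}+2$, and $d_1\ge1$.
\item $D(p)=D$ exactly. Each block is constant and positive, and is followed by the zero at $d_j+1$, so each $d_j$ is a descent. Inside a block nothing descends. At a zero position $d_j+1$ the code rises to the next block or stays zero, so no descent occurs there. The only other position to examine is $n-1$ when $d_m=n-2$, where $p_{n-1}=p_n=0$, so there is no descent.
\item Conditions (a), (b), (c) of Lemma \ref{lem:maximal_32_1_lehmer_condition} hold by construction, with the values $p_{d_j}=n-d_j$ and $p_{d_j+1}=0$.
\end{enumerate}
The converse direction of that lemma then yields $p\in\maxset{n}{\underline{32}1}$, so $D=D(p)\in\descpat{n}{\underline{32}1}$.

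The main obstacle is step (ii): verifying that the constructed code has no descents beyond the $d_j$. This requires the boundary bookkeeping between consecutive blocks and at the tail, split into the two cases $d_m=n-1$ and $d_m=n-2$. It is routine but must be done carefully so that the converse part of Lemma \ref{lem:maximal_32_1_lehmer_condition} applies with $D(p)$ equal to $D$ and not merely containing it.
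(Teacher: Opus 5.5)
Your proposal is correct and follows essentially the same route as the paper. The forward inclusion comes from Lemma~\ref{lem:maximal_32_1_lehmer_condition}(a) and Corollary~\ref{cor:32-1-descent-increase-by-2}, and the reverse inclusion uses the same block construction with $d_0=-1$ plus the converse of Lemma~\ref{lem:maximal_32_1_lehmer_condition}; your step (ii) explicitly checks that $D(p)=D$ exactly, which the paper leaves implicit under ``by construction.''
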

\begin{proof}
    Let $p \in \maxset{n}{\underline{32}1}$, and let $D(p) = \{d_1 , \ldots , d_m\}$. 
    By Lemma \ref{lem:maximal_32_1_lehmer_condition}(a) we have $d_m \in \{ n-2, n-1 \}$.
    By Corollary \ref{cor:32-1-descent-increase-by-2}, $D(p)$ has no consecutive elements. 
    


    Conversely, let $D=\{d_1, \cdots, d_m\}\subseteq[n-1]$ 
    be a set with no consecutive elements and such that $d_m \in \{n-2,n-1\}$.     
    Define $p=(p_1,\ldots,p_n)$ by the formula
   \[
    p_i=
    \begin{cases}
    n-d_j & \mbox{if } d_{j-1}+2 \le i\le d_j \text{ for some } 1\le j\le m,\\
    0 & \mbox{otherwise},
    \end{cases}
    \]
    where we define $d_0 = -1$ for convenience.
    If $p$ is a valid Lehmer code, then it satisfies the conditions of Lemma \ref{lem:maximal_32_1_lehmer_condition} by construction. So we need only show that $0 \leq p_i \leq n-i$ for $1 \leq i \leq n$.
    If $p_i > 0$, then $p_i = n -d_j$ for some $j$ with $i \le d_j$, so $p_i=n-d_j\le n-i.$ If $p_i = 0$, then the Lehmer bound is automatic. 
%
%
%
\end{proof}

The following result is Lemma 2.9 in \cite{BHR}. We give an alternate proof using descent sets that $\maxset{n}{\underline{32}1}$ is enumerated by the Fibonacci number $F_n$, where $F_1=1$, $F_2=1$ and $F_{n} = F_{n-1} + F_{n-2}$ for $n \geq 3$.

\begin{lemma}
    \label{lem:maximal_32_1_number}
    We have $|\maxset{n}{\underline{32}1}| = F_{n}$, the $n$th Fibonacci number.
\end{lemma}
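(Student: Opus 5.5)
By Lemma \ref{lem:delta_32-1_bijection}, it suffices to count the set $\descpat{n}{\underline{32}1}$. Lemma \ref{lem:descent_sets_32_1_characterization} describes this set explicitly. Write $a_n = |\descpat{n}{\underline{32}1}|$, the number of subsets $D \subseteq [n-1]$ with no two consecutive elements whose largest element lies in $\{n-2, n-1\}$. Note that $D$ is nonempty, since it must contain $n-2$ or $n-1$. I will prove $a_n = F_n$ by checking base cases and establishing the Fibonacci recurrence $a_n = a_{n-1} + a_{n-2}$.

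For the base cases: when $n=2$, the only admissible set is $\{1\}$ (since $n-2=0 \notin [1]$), so $a_2 = 1 = F_2$. When $n=3$, the admissible sets are $\{1\}$ and $\{2\}$, so $a_3 = 2 = F_3$. For completeness one can also set $a_1 = 1$ (the empty code), but I will start the recurrence at $n \ge 4$ using $n=2,3$ as the base.

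For the recurrence with $n \ge 4$, I split according to the largest element $d_m$.
\begin{enumerate}[(i)]
\item If $d_m = n-1$, then $n-2 \notin D$, so $D \setminus \{n-1\}$ is a subset of $[n-3]$ with no consecutive elements. These are in bijection with the admissible sets for $n-1$ via adjoining $n-2$: given an admissible $D'$ for $n-1$ (largest element in $\{n-3,n-2\}$), the set $(D' \setminus \{n-2\}) \cup \{n-1\}$ is admissible for $n$ with largest element $n-1$.
\item If $d_m = n-2$, then removing $n-2$ leaves an arbitrary no-consecutive subset of $[n-4]$.
\end{enumerate}
The map in case (i) is slightly awkward, so I would instead use a cleaner decomposition: \emph{shift by one}. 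The map $D \mapsto D \cup \{n-1\}$, after possibly deleting $n-2$, is messy. The cleanest route is to note that $a_n = b_{n-2} + b_{n-3}$, where $b_k$ is the number of no-consecutive subsets of $[k]$ (including the empty set). Here case (i) contributes the no-consecutive subsets of $[n-3]$, giving $b_{n-3}$, and case (ii) contributes those of $[n-4]$, giving $b_{n-4}$. Correcting the indices:
\[
a_n = b_{n-3} + b_{n-4}.
\]
The classical count is $b_k = F_{k+2}$, which follows from conditioning on whether $k \in S$, giving $b_k = b_{k-1} + b_{k-2}$ with $b_0 = 1$, $b_1 = 2$. With the convention $b_{-1} = 1$, $b_{-2} = 0$, this yields $a_n = F_{n-1} + F_{n-2} = F_n$.

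The main obstacle is only index bookkeeping at small $n$. Case (ii) for $n=3$ requires $b_{-1} = 1$ (the set $\{1\}$), which matches the Fibonacci extension $F_1 = 1$. Similarly, for $n=2$ one needs $b_{-1} + b_{-2} = 1$. I will check these boundary cases directly, as in the base cases above.
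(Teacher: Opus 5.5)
Your proof is correct, but it splits the descent sets differently from the paper.

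The paper conditions on the \emph{smallest} element. Sets in $\mathcal{D}_n(\underline{32}1)$ containing $1$ (hence not $2$) shift down by $2$ onto $\mathcal{D}_{n-2}(\underline{32}1)$. Sets not containing $1$ shift down by $1$ onto $\mathcal{D}_{n-1}(\underline{32}1)$. The constraint on the maximum survives the shift, so the family satisfies the Fibonacci recurrence by itself, with no auxiliary sequence.

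You instead condition on the \emph{largest} element. This removes the only nonstandard constraint and reduces everything to the classical count $b_k = F_{k+2}$ of subsets of $[k]$ with no two consecutive elements. This gives $a_n = b_{n-3}+b_{n-4} = F_{n-1}+F_{n-2}$ for $n \ge 4$, where no negative indices arise, and you check $n=2,3$ by hand. The cost is the auxiliary sequence, whose recurrence you sketch correctly. The gain is a concrete meaning for the two summands: $F_{n-1}$ counts the maximal codes with $d_m = n-1$ (i.e.\ $p_{n-1}=1$), and $F_{n-2}$ counts those with $d_m = n-2$ (i.e.\ $p_{n-1}=0$). That is exactly the case split in the proof of Theorem~\ref{thm:gravity_bijection}, and $\Phi$ carries it to $d_1=1$ versus $d_1=2$ on the $3\underline{21}$ side.

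When you write it up, delete the false start $a_n=b_{n-2}+b_{n-3}$. Also, the case (i) map you abandoned, $D'\mapsto (D'\setminus\{n-2\})\cup\{n-1\}$, is in fact a bijection from $\mathcal{D}_{n-1}(\underline{32}1)$ onto the sets with $d_m=n-1$, so that version would also have worked.
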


\begin{proof}
By Lemma \ref{lem:delta_32-1_bijection}, we have
$|\maxset{n}{\underline{32}1}| = |\descpat{n}{\underline{32}1}|$. 

Observe that $\descpat{2}{\underline{32}1} = \{ \{1\} \} $ and $\descpat{3}{\underline{32}1} = \{ \{1 \}, \{2 \} \}$. So $|\descpat{2}{\underline{32}1}| = 1 = F_2$ and 
$|\descpat{3}{\underline{32}1}| = 2 = F_3$.
For $n > 3$ the collection of sets in $\descpat{n}{\underline{32}1}$ containing element $1$ are in bijection with $\descpat{n-2}{\underline{32}1}$, and the collection of sets in $\descpat{n}{\underline{32}1}$ that do not contain element $1$ are in bijection with $\descpat{n-1}{\underline{32}1}$. Hence
$$
|\descpat{n}{\underline{32}1}| = 
|\descpat{n-1}{\underline{32}1}| + |\descpat{n-2}{\underline{32}1}|
\quad \mbox{for} \quad n \geq 3,
$$
which is the Fibonacci recurrence.
\end{proof}

We conclude this section with an important lemma about the zero set for a maximal $\underline{32}1$-avoiding permutation. 

\begin{lemma}
\label{lem:zero_set_entry_incresaing_by_1}
    Let $p\in \maxset{n}{\underline{32}1}$ and let $\pi \in \permpat{n}{\underline{32}1}$ be the permutation such that $p = L(\pi)$. If
    \[
    Z(p)= \{i \in [n]: p_i = 0\} = \{z_1,z_2,\ldots,z_{\ell}\},
    \]
    then $\pi_{z_k}=k$, for $ 1\le k\le \ell$.
\end{lemma}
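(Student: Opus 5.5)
By Lemma \ref{lem:zero_equal_rmin}, $Z(p)=\rmin(\pi)$. The values of $\pi$ at right-to-left minima strictly increase from left to right, so $\pi_{z_1}<\pi_{z_2}<\cdots<\pi_{z_\ell}$. The claim is therefore equivalent to the set equality
\[
\{\pi_{z_1},\ldots,\pi_{z_\ell}\}=\{1,\ldots,\ell\}.
\]
This in turn is equivalent to saying that every position $k\notin Z(p)$ has $\pi_k>\ell$. I plan to prove it by showing, for each $k$, that $\pi_{z_k}=k$, by induction on $k$.

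The key observation is that, by Lemma \ref{lem:maximal_32_1_lehmer_condition} and Corollary \ref{cor:zero-set-32-1-avoiding}, the zeros sit at positions $d_j+1$ (and possibly at $n$). Between consecutive zeros $z_{k-1}<z_k$, the code is constant on the block $[z_{k-1}+1,z_k-1]$, with value $n-d_k=n-z_k+1$. Take any position $i$ in this block. It has value $p_i=n-z_k+1$ and there are exactly $n-i$ positions to its right. The positions to the right of $z_k-1$ number $n-z_k+1$, so $p_i$ equals that count. The tail after the block is what produces this.

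Concretely, consider the last block position $z_k-1=d_k$. There $p_{d_k}=n-d_k$ is maximal, so $\pi_{d_k}$ exceeds every entry to its right. Using the decoding procedure of Section \ref{sec:decode} should then be cleanest. I would process left to right and claim inductively that, just before position $z_{k-1}+1$, the candidate list $C$ consists of $\{k,\ldots\}$ together with the larger values, i.e.\ the values $1,\ldots,k-1$ have been used exactly at $z_1,\ldots,z_{k-1}$.

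Within the block $[z_{k-1}+1,z_k-1]$, each position uses $p_i=n-z_k+1$. Let $r$ be the number of remaining candidates when we reach position $i$; then $r=n-i+1$. Position $i$ takes the $(n-z_k+2)$-th smallest remaining value. Since the number of candidates shrinks by one each step while the index stays fixed, the block consumes the candidates of ranks $n-z_k+2$ through $n-z_k+1+(z_k-1-z_{k-1})$. In terms of the current list of size $n-z_{k-1}$, these are exactly the largest $z_k-1-z_{k-1}$ candidates, namely its top segment. Hence after the block the smallest remaining candidate is still the $k$-th smallest value overall, i.e.\ $k$. At $z_k$ we skip $0$ entries, so $\pi_{z_k}=k$, which completes the induction.

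The case of the final zero $z_\ell=n$ when $d_m=n-2$ also needs care. There, $z_{\ell-1}=n-1$ and there is no block between $z_{\ell-1}$ and $z_\ell$, so both positions take the smallest remaining candidate, giving values $\ell-1$ and $\ell$. The main obstacle is the index bookkeeping in the block step, namely verifying that a constant value $n-z_k+1$ always selects the current maximum candidate. To handle it, I would note directly that with $r=n-i+1$ candidates remaining, the rank $p_i+1=n-z_k+2$ is at least $r-(z_k-1-i)$. This forces the choices to lie in the top $z_k-1-z_{k-1}$ candidates, and since all of them are distinct, they must exhaust that top segment.
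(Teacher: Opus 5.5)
Your argument is essentially the paper's proof: induction on $k$ through the decoding procedure, using that $p$ is constant with value $n-z_k+1$ on $[z_{k-1}+1,z_k-1]$, and your observation that each such block removes exactly the top segment of the candidate list is a sharper form of the paper's count $p_i\ge \ell-k+1$. One fix is needed. Knowing that $1,\ldots,k-1$ have been used does not by itself show that $k+1$ is still available at the next stage, so state the invariant as $C=\{k,k+1,\ldots,k-1+n-z_{k-1}\}$ just before position $z_{k-1}+1$ (the paper instead carries ``$k,\ldots,\ell$ are still unused''). Your top-segment fact, together with the removal of $k$ at $z_k$, preserves this interval form, which closes the induction.
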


For example, consider $p = (6,6,6,0,3,3,0,1,0) \in \maxset{9}{\underline{32}1}$ with corresponding permutation $\pi=789156243$. Its zero set is $Z(p)=\{4,7,9\}$, and we have $\pi_4=1$, $\pi_7=2$ and $\pi_9=3$.

\begin{proof}
    Set $z_0=0$. By Lemma \ref{lem:maximal_32_1_lehmer_condition}, for every
    $1\le k\le \ell$ and $i \in [z_{k-1}+1, z_k-1]$, we have $p_i=n-z_k+1.$
    Recall from Section \ref{sec:decode} that when a permutation is reconstructed from its Lehmer code, the entry $\pi_i$ is chosen to be the $(p_i+1)$th smallest unused value.
    
    We prove by induction on $k$ that $\pi_{z_k}=k$. More precisely, we prove a stronger statement: before position $z_k$ is decoded, the values $1,\ldots,k-1$ have already been used, while $k,\ldots,\ell$ are still unused.
    
    For $k=1$, we have $p_i=n-z_1+1$ for $i \in [1, z_1 -1]$. Since $z_1,\ldots,z_{\ell}$ are $\ell$ positions in $[z_1,n]$, we have $\ell \le n-z_1+1 = p_i$. Hence every entry before $z_1$ skips the first $p_i \geq \ell$ unused values, so none of $1,\ldots,\ell$ is used before $z_1$. Since $p_{z_1}=0$, we get $\pi_{z_1}=1$.
        
    Now let $k > 1$, and assume that $\pi_{z_j} = j$ for all $1 \leq j \leq k-1$ and that  $\pi_i \notin \{k, \ldots, \ell\}$ for $i \in [1, z_{k-1}]$. 
    We show that no entry of $\pi$ between $z_{k-1}$ and $z_k$ uses any of the values $k,\ldots,\ell$. For every $i \in [z_{k-1}+1, z_k-1]$ we have $p_i = n-z_k + 1$. Because $z_k, \dots, z_{\ell}$ are $\ell-k + 1$ positions in $[z_k,n]$, we have
    $
    \ell-k+1 \le n- z_k + 1 = p_i.
    $
    Thus $\pi_i$ is chosen as the $(p_i+1)$st smallest unused value, so it skips at least the first $p_i \geq \ell-k+1$ unused values, including $k,\ldots,\ell$. Hence the values $k,\ldots,\ell$ remain unused until position $z_k$. Since $p_{z_k}=0$, the entry $\pi_{z_k}$ is the smallest unused value, namely $k$.
    Therefore, by induction, $\pi_{z_k}=k$ for all $1\le k\le \ell$.
\end{proof}

%% file: section-3-21.tex
\section{Lehmer Codes for 3\underline{21}-Avoiding Permutations}

\label{sec:3-21-permutations}

This section follows a structure that is parallel to the previous section.
We provide characterizations of the Lehmer codes in $\lehmerpat{n}{3\underline{21}}$ and the maximal Lehmer codes in $\maxset{n}{3\underline{21}}$. We then show that a maximal Lehmer code $p \in \maxset{n}{3\underline{21}}$ is completely determined by its descent set $D(p)$. 
Next, we show that the mapping $\F$ from equation \eqref{eqn:gravity_bijection} is a bijection, and hence that $\maxset{n}{3\underline{21}}$ is enumerated by the Fibonacci numbers.  We close with a lemma that specifies the permutation values on the descent set of $p \in \maxset{n}{3\underline{21}}$.


We start by giving the Lehmer code condition for $\lehmerpat{n}{3\underline{21}}$-avoiding permutation. See Figure
\ref{fig:32-1-3-21-posets} for a visualization of $\lehmerpat{4}{3\underline{21}}$.

\begin{lemma}
\label{lem:condition_3_21}
    The Lehmer code $p=(p_1, \ldots, p_n) \in \lehmerpat{n}{3\underline{21}}$  if and only if the following condition holds: 
    \begin{equation}
    \label{eqn:condition_3_21}
    \mbox{if } j\in D(p) \mbox{ then }
    p_j+j>p_i+i \mbox{ for } 1\le i<j.
    \end{equation}
\end{lemma}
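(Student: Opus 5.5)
Our plan is to translate the vincular condition into a statement about left-to-right maxima, and then read those off the Lehmer code using Lemma~\ref{lem:lehmer_equation_lmax}.

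\textbf{Step 1: descents of $\pi$ and $p$ agree.} As in the proof of Lemma~\ref{lem:condition_32_1}, $\pi_j>\pi_{j+1}$ if and only if $p_j>p_{j+1}$. The count $p_j$ includes $\pi_{j+1}$ exactly when $\pi_j>\pi_{j+1}$. Every other position $k>j+1$ counted in $p_{j+1}$ is also counted in $p_j$ when $\pi_j>\pi_{j+1}$. If instead $\pi_j<\pi_{j+1}$, then every $k$ counted in $p_j$ is counted in $p_{j+1}$, and position $j+1$ itself is counted in $p_{j+1}$ but not in $p_j$, so $p_j\le p_{j+1}$.

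\textbf{Step 2: reformulate avoidance.} A copy of $3\underline{21}$ consists of an index $i<j$ with $\pi_i>\pi_j>\pi_{j+1}$. So $\pi$ avoids $3\underline{21}$ exactly when every descent position $j$ of $\pi$ satisfies $\pi_i<\pi_j$ for all $i<j$, that is, $j\in\lmax(\pi)$. By Step 1, the descent positions of $\pi$ are precisely $D(p)$. Hence the condition becomes $D(p)\subseteq\lmax(\pi)$.

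\textbf{Step 3: express $j\in\lmax(\pi)$ in terms of $p$.} We claim that $j\in\lmax(\pi)$ if and only if $p_j+j>p_i+i$ for all $i<j$.

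For the forward direction, Lemma~\ref{lem:lehmer_equation_lmax} gives $p_j+j=\pi_j$. For a general index $i$, the entries smaller than $\pi_i$ consist of the $p_i$ entries to its right plus at most $i-1$ entries to its left, so $\pi_i-1\le p_i+(i-1)$, i.e. $p_i+i\ge\pi_i$. This inequality points the wrong way for a direct comparison, so we argue instead by comparing the sets counted. Take $i<j$ with $\pi_i<\pi_j$. Every $k>j$ with $\pi_k<\pi_i$ also satisfies $\pi_k<\pi_j$. So $p_i\le |\{i<k\le j\}|+|\{k>j:\pi_k<\pi_j\}|$, that is, $p_i\le (j-i)+p_j$. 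For strictness, observe that position $j$ itself lies in $(i,j]$ but $\pi_j>\pi_i$, so it is not counted in $p_i$. This gives $p_i\le j-i-1+p_j$, hence $p_i+i<p_j+j$.

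For the converse, suppose $j\notin\lmax(\pi)$, and let $i<j$ be the position of the maximum of $\pi_1,\dots,\pi_{j}$, so $\pi_i>\pi_j$ and $i\in\lmax(\pi)$. Then $p_i+i=\pi_i$. Since $p_j+j\ge\pi_j$ by the bound above, this direct comparison does not settle the matter. Instead we bound $p_j$ from above using left positions. Among the $\pi_j-1$ values below $\pi_j$, those at positions $k>j$ are at most the values below $\pi_i$ at positions $k>j$. Separately, every position $k$ with $i<k\le j$ and $\pi_k<\pi_i$ (this includes $k=j$) is counted in $p_i$. Hence $p_i\ge (\text{number of }k\in(i,j]\text{ with }\pi_k<\pi_i)+p_j$. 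Each $k\in(i,j]$ has $\pi_k<\pi_i$, because $\pi_i$ is the maximum of the first $j$ entries. So $p_i\ge (j-i)+p_j$, giving $p_i+i\ge p_j+j$, which violates \eqref{eqn:condition_3_21}.

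\textbf{Conclusion and main obstacle.} Combining Steps 2 and 3 proves the lemma. The main obstacle is Step 3, where one must pick the right comparison index $i$ (a prefix maximum) rather than rely on Lemma~\ref{lem:lehmer_equation_lmax} alone.
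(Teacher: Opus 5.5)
Your proof is correct and rests on the same two counting inequalities as the paper's proof. If $\pi_i$ is the maximum of $\pi_1,\ldots,\pi_j$ with $i<j$, then $p_i+i\ge p_j+j$; and if $i<j$ with $\pi_i<\pi_j$, then $p_i+i<p_j+j$. You route these through the intermediate equivalence ``$\pi$ avoids $3\underline{21}$ if and only if $D(p)\subseteq\lmax(\pi)$,'' which spells out the counting that the paper's second direction only sketches.

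One harmless slip: in Step 1, position $j+1$ is never counted in $p_{j+1}$. The inclusion you state first already gives $p_j\le p_{j+1}$, so nothing breaks.
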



\begin{proof}
Let $p=(p_1, \ldots, p_n)$ be the Lehmer code of $\pi = \pi_1 \cdots \pi_n$.
We prove the contrapositive: $\pi$ has a $3\underline{21}$ pattern if and only if there exists $1 \leq i < j$ such that $p_j > p_{j+1}$ and $p_j+j \leq p_i+i$.

 Assume  there exists $\pi_i>\pi_j>\pi_{j+1}$ with $1\le i <j$. Observe that  $p_j > p_{j+1}$. Without loss of generality, $\pi_i = \max \{ \pi_1, \ldots, \pi_{j-1}\}$, so that $p_k < p_i$ for $i < k \leq j$. Consequently, $p_i \ge p_j + (j-i)$, or in other words, $p_i+i \ge p_j + j$.

Next, assume there exists $p_i$ and $p_j > p_{j+1}$ with $1\le i<j$ and $p_j+j\le p_i+i$, equivalently $p_j \le p_i - (j-i)$. In between $i$ and $j$, we have accounted for at most $j-i-1$ available elements below $\pi_i$. So  $\pi_j < \pi_i$ and clearly $\pi_j > \pi_{j+1}$, which creates a $3\underline{21}$ pattern at $i$, $j$, $j+1$. 
\end{proof}

\begin{cor}
\label{cor:descent-increase-by-2}
If $D(p) = \{ d_1, \ldots, d_m \}$ is the descent set of $p \in \lehmerpat{n}{3\underline{21}}$, then $d_{i+1}-d_i \geq 2$.
\end{cor}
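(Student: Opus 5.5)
We need to prove that if $p \in \lehmerpat{n}{3\underline{21}}$ has descents at $j$ and $j+1$, we reach a contradiction. The plan is to apply condition \eqref{eqn:condition_3_21} from Lemma \ref{lem:condition_3_21} to the descent $j+1$, choosing $i = j$ as the earlier index.

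Suppose, for contradiction, that $D(p)$ contains two consecutive indices $d_i$ and $d_{i+1} = d_i + 1$. Write $j = d_i$, so that both $j$ and $j+1$ lie in $D(p)$. Then:

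1. From $j \in D(p)$ we get $p_j > p_{j+1}$. Since the entries are integers, this gives $p_j \geq p_{j+1} + 1$.
2. Adding $j$ to both sides yields $p_j + j \geq p_{j+1} + (j+1)$.
3. On the other hand, $j+1 \in D(p)$, and $j < j+1$ with $j \geq 1$. So condition \eqref{eqn:condition_3_21}, applied at the descent $j+1$ with earlier index $j$, demands the strict inequality $p_{j+1} + (j+1) > p_j + j$.

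The inequalities in steps 2 and 3 contradict each other. Hence no two descents are consecutive, and $d_{i+1} - d_i \geq 2$ for all $i$.

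There is no real obstacle here. The only point needing care is that the "$i$" in condition \eqref{eqn:condition_3_21} is a dummy index, distinct from the subscript $i$ in $d_i$. We instantiate it as $j = d_i$, which lies in the required range $1 \leq j < j+1$.

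This argument mirrors the proof of Corollary \ref{cor:32-1-descent-increase-by-2}, but it uses the "$p_j + j$ is a strict left-to-right record" form of the $3\underline{21}$ condition rather than the zero-after-descent condition.
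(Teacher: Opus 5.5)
Your proposal is correct and follows the paper's proof: both derive $p_j + j \geq p_{j+1} + (j+1)$ from the descent at $j$ and observe that this contradicts condition \eqref{eqn:condition_3_21} applied to the descent at $j+1$ with earlier index $j$.
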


\begin{proof}
Suppose that $p$ has descents at $i$ and $i+1$. Then
$
p_i+i \geq (p_{i+1} + 1) + i = p_{i+1} + (i+1),
$
which contradicts condition \eqref{eqn:condition_3_21} for $j=i+1$.
\end{proof}


Next, we characterize the Lehmer codes for $\maxset{n}{3\underline{21}}$. Figure \ref{fig:3-21-example} shows the Lehmer codes of $\maxset{5}{\underline{32}1}$ along with their corresponding permutations and descents sets.

\begin{figure}[ht]
\begin{center}
\begin{tikzpicture}[scale=.6]

\begin{scope}

    \node at (-3,2.5) {\footnotesize $\pi$};
    \node at (-3,0.5) {\footnotesize $p$};
    \node at (-3,-1) {\footnotesize $D(p)$};

        \node at (0,2.5) {\footnotesize $34251$};
        \newtri[(0, 0)]{2,2,1,1}{teal};
        \node at (0,-1) {\footnotesize $\{2,4\}$};
        
        \node at (3,2.5) {\footnotesize $42351$};
        \newtri[(3, 0)]{3,1,1,1}{teal};
        \node at (3,-1) {\footnotesize $\{1,4\}$};

        \node at (6,2.5) {\footnotesize $43512$};
        \newtri[(6, 0)]{3,2,2,0}{teal};
        \node at (6,-1) {\footnotesize $\{1,3\}$};

        \node at (9,2.5) {\footnotesize $45123$};        
        \newtri[(9, 0)]{3,3,0,0}{teal};
        \node at (9,-1) {\footnotesize $\{2\}$};
        
        \node at (12,2.5) {\footnotesize $51234$};        
        \newtri[(12, 0)]{4,0,0,0}{teal};
        \node at (12,-1) {\footnotesize $\{1\}$};

\end{scope}

\end{tikzpicture}    

\caption{The maximal Lehmer codes $\maxset{5}{3\underline{21}}$. The triangle representation of each code $p$ is accompanied by its corresponding permutation $\pi$ and descent set $D(p)$.
}

\label{fig:3-21-example}

\end{center}    
\end{figure}

\begin{lemma}
\label{lem:maximal_condition_3_21}
The Lehmer code $p=(p_1, \ldots, p_n) \in \maxset{n}{3\underline{21}}$ if and only if the following conditions hold for $p$ and its descent set $D(p) = \{ d_1, \ldots, d_m \}.$
\begin{enumerate}
    \item[(a)] The first descent $d_1 \in \{1,2\}$.

    \item[(b)] We have $p_{d_j} + d_j =n-(m-j)$ for $1\le j\le m$.

    \item[(c)] The code $p$ is constant on the subintervals 
            $$[1, d_1], \quad  [d_1+1, d_2], \quad [d_2+1, d_3], \quad  \ldots, \quad  [d_{m-1}+1, d_m],  \quad [d_m+1, n].$$
\end{enumerate}
\end{lemma}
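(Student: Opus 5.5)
We argue in two directions, reusing the template of Lemma \ref{lem:maximal_32_1_lehmer_condition}. Throughout, set $q_i = p_i + i$. Condition \eqref{eqn:condition_3_21} of Lemma \ref{lem:condition_3_21} then says: every descent $j$ is a strict left-to-right maximum of the sequence $q$. Within a constant block of $p$, the sequence $q$ increases by $1$ per step. Across a descent, $q_{j+1} = q_j - (p_j - p_{j+1}) + 1 \le q_j$.

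\textbf{Necessity.} Let $p \in \maxset{n}{3\underline{21}}$.

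First we prove (c). Suppose $p_i < p_{i+1}$ (an ascent). Replace $p_i$ by $p_{i+1}$; the Lehmer bound survives because $p_{i+1} \le n-i-1$. Index $i$ is not a descent in either code. If $i-1$ was a descent, it stays one and its $q$-value is unchanged. Raising $q_i$ can only hurt later descents $j > i$, but $q_j > q_{i+1} > q_i^{\mathrm{new}} = q_{i+1} - 1$, so the condition still holds for them. The one new descent that might be created is at $i-1$, when $p_{i-1}$ lies between $p_i$ and $p_{i+1}$. To handle this, we instead raise the whole maximal weakly increasing run ending at $i$ to the value $p_{i+1}$, and check condition \eqref{eqn:condition_3_21} using the same telescoping in $q$. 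This contradicts maximality, so $p$ has no ascents. A code with no ascents is constant between descents, which is (c).

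Next we prove (b) by downward induction on $j$. Since $p_n = 0$, the last block gives $q_n = n$, and the last block is constant, so it equals $0$. Hence $q_{d_m} \le n-1$ would be forced only if $q_{d_m} > q_i$ for all $i < d_m$ and $q_{d_m}$ exceeds the last block. Maximality says $p_{d_m}$ should be pushed up, together with its whole block, as far as the Lehmer bound $p_{d_m} \le n-d_m$ and the requirement that the next descent still dominate allow. We expect this to give $q_{d_j} = q_{d_{j+1}} - 1$, so $q_{d_j} = n-(m-j)$, with the bound $q_{d_m} = n$ realized. The same raising argument shows that for (a) no block of length $\ge 3$ can start the code: if $d_1 \ge 3$, raising $p_1$ by one creates a new descent at $1$ with $q_1$ vacuously maximal, which stays valid, a contradiction.

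\textbf{Sufficiency.} Suppose (a)–(c) hold. Constancy on blocks together with (b) makes $q_{d_j}$ strictly increasing in $j$, and $q$ increasing within blocks, so \eqref{eqn:condition_3_21} holds and $p \in \lehmerpat{n}{3\underline{21}}$.

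For maximality, take $s \succeq p$ in $\lehmerpat{n}{3\underline{21}}$ and suppose $s_k > p_k$ for some $k$. We aim for a contradiction by locating a descent of $s$ at or after $k$ whose $q$-value fails to exceed that of some earlier entry. Since $q_{d_j}$ is one less than $q_{d_{j+1}}$, and each block is tight to the Lehmer bound $q_n = n$ at the end, any increase propagates to violate either the Lehmer bound or \eqref{eqn:condition_3_21}. Condition (a) rules out the extra room at the start.

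\textbf{Main obstacle.} The main obstacle is the tightness claim (b), $q_{d_j} = q_{d_{j+1}} - 1$, together with the maximality half of sufficiency. Both require showing that a simultaneous block-raise is always legal, or always illegal, and we would verify these by a careful case analysis on the telescoped $q$-values.
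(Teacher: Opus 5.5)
There is a genuine gap, and it is the part you flagged as the main obstacle: maximality in the sufficiency direction is not argued. The sentence ``any increase propagates to violate either the Lehmer bound or \eqref{eqn:condition_3_21}'' names no mechanism. Nothing propagates automatically: raising one coordinate of $p$ forces no other coordinate of $s$ to change, so the contradiction has to come from a specific pair of indices.

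The missing idea is to take the \emph{last} changed index $\ell=\max\{i:s_i>p_i\}$, so that $s$ agrees with $p$ after $\ell$, and then split on the position of $\ell$.
\begin{enumerate}[(1)]
\item If $\ell=d_k$, then $k<m$, because $p_{d_m}=n-d_m$ is already at the Lehmer bound. Then $s_{d_k}+d_k\ge n-m+k+1=s_{d_{k+1}}+d_{k+1}$ violates \eqref{eqn:condition_3_21} at the unchanged descent $d_{k+1}$.
\item If $\ell>d_m$, then $s_\ell>0=s_{\ell+1}$ is a descent of $s$ with $s_\ell+\ell\le n=s_{d_m}+d_m$.
\item If $d_{j-1}<\ell<d_j$, then $s_\ell>p_\ell=p_{\ell+1}=s_{\ell+1}$ is a descent of $s$. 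Condition \eqref{eqn:condition_3_21} at $\ell$ gives $s_\ell+\ell\ge n-m+j=s_{d_j}+d_j$, which violates the condition at the unchanged descent $d_j$.
\item If $\ell<d_1$, then (a) forces $\ell=1$ and $d_1=2$, so $1,2\in D(s)$, contradicting Corollary~\ref{cor:descent-increase-by-2}.
\end{enumerate}
This case split is where the tightness from (b) and condition (a) are actually used. Your sketch never says which descent of $s$ fails, so it does not yet prove anything.

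The necessity of (b) is also left at ``we expect.'' Your downward induction is a sound plan, and it would replace the paper's pigeonhole step, but you need to carry it out. For $j=m$ the Lehmer bound gives $q_{d_m}\le n$. For $j<m$, condition \eqref{eqn:condition_3_21} at $d_{j+1}$ together with the induction hypothesis gives $q_{d_j}\le n-(m-j)$. If this inequality is strict, raise the whole block $[d_{j-1}+1,d_j]$ (with $d_0=0$) by one, and verify four things:
\begin{enumerate}[(i)]
\item no descent is created, since the gap at $d_{j-1}$ only shrinks;
\item $d_j$ still dominates every earlier $q$-value;
\item $q_{d_j}+1<q_{d_{j+1}}$, so the later descents remain valid;
\item $q_{d_j}+1\le n$, so the Lehmer bound survives on the constant block.
\end{enumerate}

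Your worry in (c) is backwards. Raising $p_i$ can only destroy a descent at $i-1$, never create one: if $p_i<p_{i-1}<p_{i+1}$, then $i-1$ was already a descent and stops being one. So the single-entry raise $p_i\mapsto p_{i+1}$ already suffices, exactly as in the paper, and the detour through raising the whole run is unnecessary. That detour happens to be valid, but you did not verify it. In (a) you should also state the check $p_1+2<p_1+d_1=q_{d_1}$, which relies on (c).
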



\begin{proof}
    Let $p=(p_1, \ldots, p_n) \in \lehmer{n}$ be 3\underline{21}-avoiding with descent set $D(p) = \{d_1, \cdots, d_m\}$. 
    
    
    We prove the forward direction by contrapositive. Suppose that at least one of (a), (b), and (c) fails. We show that $p$ is not maximal.

    First, suppose that (a) is false, so that $d_1 > 2$. Define $p'= (p'_1,\ldots,p'_n)$ by
    \[
    p'_k= \begin{cases} p_1+1, & k=1,\\
    p_k, & \text{otherwise}. \end{cases}
    \]
   Observe that $p_1 \leq p_{d_1}$ since there are no descents in $[1,d_1-1]$. We have
    $$
    p'_1+ 1 = p_1+2 \leq p_{d_1}+2 < p_{d_1}+d_1 = p'_{d_1} + d_1
    $$
    and all of the remaining conditions \eqref{eqn:condition_3_21} hold as well.
    We have $p \prec p'$,  hence $p$ is not maximal.
    
    Second, suppose that (b) fails. Assume that $p_{d_{\ell}} + d_{\ell} \neq n-(m-{\ell})$, for some $d_{\ell} \in D(p)$. There are two cases.
    
    \textit{Case 1.} Suppose that $p_{d_{\ell}} + d_{\ell} < n-(m-\ell)$. Without loss of generality, choose $\ell$ so that  $p_{d_a} + d_a= n - (m - a)$, for $\ell < a \leq m$. Define $p'=(p'_1,\ldots,p'_n)$ by
    \[
    p'_k=
    \begin{cases}
    p_k+ 1 & \text{if } k \in [d_{\ell-1}+1, d_{\ell}] \\
    p_k & \text{otherwise}.
    \end{cases}
    \]
    For $p'_{d_b}$, with $b < {\ell}$,
    \[
    p'_{d_b} + d_b = p_{d_b} + {d_b} < p_{d_{\ell}} + d_{\ell} < p_{d_{\ell}} +1 + d_{\ell} = p'_{d_{\ell}} + d_{\ell}.
    \]
    For $p'_{d_b}$, with $b \ge {\ell} + 1$,
    \[
    p'_{d_{\ell}} + d_{\ell} \le n-(m-{\ell}) < n-(m-{\ell} - 1) = p_{d_{{\ell}+1}} + d_{{\ell}+1} \le p_{d_b} + d_b  = p'_{d_b} + d_b.
    \]
    Since all entries of $p'$ are the same as $p$ except for $d_{{\ell}-1} < k \le d_{{\ell}}$, the above two inequalities show $p'$ is also $3\underline{21}$-avoiding. Thus,  $p \prec p'$, so $p$ is not maximal.
    
    \textit{Case 2.} Suppose that $p_{d_{\ell}}+ d_{\ell} > n-m+{\ell}$. By condition \eqref{eqn:condition_3_21}, the numbers $p_{d_{\ell}}+d_{\ell},p_{d_{{\ell}+1}}+d_{{\ell}+1},\ldots,p_{d_m}+d_m$
    are $m-{\ell}+1$ distinct integers all at most $n$. These $m - {\ell} + 1$ distinct entries must all lie in the set $\{n-m+{\ell}+1,\ldots,n\}$ which has only $m-{\ell}$ elements. This is impossible by pigeonhole principle.

    Third, suppose that (c) is false. Then there exists an index $i\notin D(p)$ such that $p_i\neq p_{i+1}$. Since $i\notin D(p)$, we have $p_i<p_{i+1}$. Define $p'=(p'_1,\ldots,p'_n)$ by
    \[
    p'_k= \begin{cases} p_{i+1} & \text{if } k=i,\\
    p_k & \text{otherwise}. \end{cases}
    \]
    Then $p'$ is still a Lehmer code, since
    $
    p'_i=p_{i+1}\le n-(i+1)<n-i.
    $
    Moreover, $p\prec p'$.
    It remains to check that $p'$ still satisfies condition \eqref{eqn:condition_3_21}.  The only change is the increase of the $i$-th entry. Since
    $
    p'_i=p_{i+1}=p'_{i+1},
    $
    no descent is created at position $i$. Also, no new descent is created at position $i-1$. Thus the only possible issues are for $j \in D(p')$ with $j > i$. 
 Since $p'$ agrees with $p$ at positions $j$ and $j+1$, we have $j \in D(p)$. Hence, using condition \eqref{eqn:condition_3_21} for $p$, we have
    \[
    p'_i+i = p_{i+1}+i < p_{i+1}+i+1 \le p_j+j= p'_j+j.
    \]
    All other inequalities in \eqref{eqn:condition_3_21} are unchanged. Therefore
    $
    p'\in \lehmerpat{n}{3\underline{21}},
    $
    which contradicts the maximality of $p$. Hence condition (c) must hold.
This completes the proof of the forward direction.


    
    Next, suppose that conditions $(a)$, $(b)$, and $(c)$ hold for $p$. We prove that $p\in \maxset{n}{3\underline{21}}$. Assume for the sake of contradiction that there exists $s=(s_1,\ldots,s_n)\in \lehmerpat{n}{3\underline{21}}$ such that $p\prec s$. Let $\ell=\max\{i:s_i>p_i\}$, so that $s_i=p_i$ for all $i>\ell$. There are four cases.
    
    First, suppose that $\ell=d_k$ for some $1\le k\le m$. Since $p_{d_m}=n-d_m$ by condition $(b)$, we must have $k<m$. Note that $s_{d_k}>p_{d_k}$ and all entries after index $d_k$ are unchanged, so we have
    \[
    s_{d_k}+d_k \ge p_{d_k}+d_k+1 = n-m+k+1 = p_{d_{k+1}}+d_{k+1} = s_{d_{k+1}}+d_{k+1},
    \]
    which violates condition \eqref{eqn:condition_3_21} for $s$ with $i=d_k$ and $j=d_{k+1}$.
    
    Second, suppose that $\ell>d_m$. Since $p_{d_m}=n-d_m$, we have $s_{d_m}=p_{d_m}$. By condition $(c)$, $p$ is constant on $[d_m+1,n]$. Since $p_n=0$, this gives $p_i=0$ for all $i>d_m$. Hence
    \[
    s_\ell>s_{\ell+1}=0,
    \]
    so $\ell$ is a descent of $s$. But
    \[
    s_\ell+\ell\le n=s_{d_m}+d_m,
    \]
    which violates condition \eqref{eqn:condition_3_21} for $s$  with $i=d_m$ and $j=\ell$.
    
    Third, suppose that $d_{j-1}<\ell<d_j$ for some $j\ge 2$. Since $p$ is constant on $[d_{j-1}+1,d_j]$ by condition $(c)$, we have $p_\ell=p_{\ell+1}$. Also $s_{\ell+1}=p_{\ell+1}$ because $\ell$ is the largest index where $p$ and $s$ differ. Therefore
    $
    s_\ell>p_\ell=p_{\ell+1}=s_{\ell+1},
    $
    so $\ell$ is a descent of $s$. Condition \eqref{eqn:condition_3_21} gives
    \[
    s_\ell+\ell>s_{d_{j-1}}+d_{j-1} \ge p_{d_{j-1}}+d_{j-1} = n-m+j-1.
    \]
    Thus $s_\ell+\ell\ge n-m+j$. Since $d_j>\ell$, the entry at $d_j$ is unchanged, so
    \[
    s_{d_j}+d_j = p_{d_j}+d_j = n-m+j \le s_\ell+\ell,
    \]
    which violates condition \eqref{eqn:condition_3_21} for $s$  with $i=\ell$ and $j=d_j$.
    
    Finally, suppose that $\ell<d_1$. By condition $(a)$, the only possible case is $\ell=1<d_1=2$. Since $p$ is constant on $[1,d_1]$, we have $p_1=p_2$. Thus
    $
    s_1>p_1=p_2=s_2,
    $
    so $1 \in D(s)$. The descent at $d_2$ is unchanged, so $2 \in D(s)$ as well, which contradicts Corollary~\ref{cor:descent-increase-by-2}. Therefore no such $s$ exists, and $p$ is maximal.
\end{proof}




Lemma~\ref{lem:maximal_condition_3_21} shows that a maximal Lehmer code $p \in \maxset{n}{3\underline{21}}$ is completely determined by its descent set. 
Analogous to the previous section, we formulate this observation using the mapping
$
\descmap{3\underline{21}}: \maxset{n}{3\underline{21}} \rightarrow \descpat{n}{3\underline{21}}
$
given by $\descmap{3\underline{21}}(p) = D(p)$.

\begin{lemma}
\label{lem:delta_3-21_bijection}
    The mapping
    $\descmap{3\underline{21}}: \maxset{n}{3\underline{21}} \rightarrow \descpat{n}{3\underline{21}}$ is a bijection.
\end{lemma}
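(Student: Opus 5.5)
Surjectivity of $\descmap{3\underline{21}}$ holds by the definition of $\descpat{n}{3\underline{21}}$ as the image of $\descmap{3\underline{21}}$. So the whole proof reduces to injectivity. The plan is to show that a maximal code is reconstructed entirely from its descent set, using Lemma~\ref{lem:maximal_condition_3_21}.

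Let $p \in \maxset{n}{3\underline{21}}$ with $D(p) = \{d_1, \ldots, d_m\}$.

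\begin{enumerate}
\item \textbf{Constancy.} By Lemma~\ref{lem:maximal_condition_3_21}(c), $p$ is constant on each of the intervals
$$[1,d_1], \quad [d_1+1,d_2], \quad \ldots, \quad [d_{m-1}+1,d_m], \quad [d_m+1,n].$$
Set $d_0 = 0$. For $1 \le j \le m$, every $i \in [d_{j-1}+1, d_j]$ satisfies $p_i = p_{d_j}$.
\item \textbf{Block values.} By Lemma~\ref{lem:maximal_condition_3_21}(b), $p_{d_j} = n-(m-j)-d_j$. This value is a function of $D(p)$, $n$ and $j$ alone.
\item \textbf{Final block.} The last block $[d_m+1,n]$ contains the index $n$, and $p_n = 0$ for every Lehmer code. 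Constancy then forces $p_i = 0$ for all $i > d_m$. This also agrees with (b) at $j=m$, which gives $p_{d_m} = n - d_m$.
\item \textbf{Explicit formula.} Combining these steps, every entry is given by
$$p_i = n - (m-j) - d_j \quad \text{for } d_{j-1} < i \le d_j, \qquad p_i = 0 \quad \text{for } i > d_m.$$
The right-hand side depends only on $D(p)$.
\end{enumerate}

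Now take $p, s \in \maxset{n}{3\underline{21}}$ with $D(p) = D(s)$. Both codes have the same $m$ and the same indices $d_1, \ldots, d_m$, so the formula above returns the same value at every index, and $p = s$. Hence $\descmap{3\underline{21}}$ is injective, and therefore a bijection.

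There is no real obstacle here. The one point needing care is the tail block $[d_m+1,n]$: condition (b) says nothing about it, so its value must come from $p_n = 0$ together with constancy. The empty descent set cannot occur, since Lemma~\ref{lem:maximal_condition_3_21}(a) requires $d_1 \in \{1,2\}$.
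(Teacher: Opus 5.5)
Your proposal is correct and follows the paper's proof: surjectivity holds by definition, and injectivity holds because Lemma~\ref{lem:maximal_condition_3_21}(b) and (c) fix every block value from $D(p)$. As in the paper, the tail block is forced to be zero by $p_n=0$.
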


\begin{proof}
    The map is surjective by the definition of $\descpat{n}{3\underline{21}}$. It remains to show injectivity.

    Let $p, s \in \maxset{n}{3\underline{21}}$ and suppose $D(p) = D(s) = \{d_1, \cdots, d_m\}$. By Lemma \ref{lem:maximal_condition_3_21}, both $p$ and $s$  are constant on the disjoint subintervals
    $$
    [1, d_1],\quad [d_1+1, d_2],\quad [d_2+1, d_3],\quad \ldots \quad , \quad [d_{m-1}+1, d_m], \quad [d_m+1, n].
    $$
    Furthermore, $p_{d_j}=s_{d_j}=n-m+j-d_j$ for every $1\le j\le m,$ and clearly $p_n=s_n=0$.
    That is, the endpoint values are equal for each constant subinterval, so $p=s$.
\end{proof}

We can now describe the collection of descent sets for these maximal codes. In particular, either 1 or 2 must be a descent, and descents cannot be adjacent.

\begin{lemma}
\label{lem:descent_sets_3_21_characterization}
We have
\[
\descpat{n}{3\underline{21}} = \{ \{d_1, \cdots, d_m\}\subseteq[n-1]:  d_1\in\{1,2\} \mbox{ and }  d_{j+1}-d_j\ge 2 \mbox{ for } 1 \leq j < m \}.
\]
\end{lemma}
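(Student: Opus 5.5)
We prove the two inclusions separately, mirroring the proof of Lemma \ref{lem:descent_sets_32_1_characterization}. For the forward inclusion, let $p \in \maxset{n}{3\underline{21}}$ with $D(p)=\{d_1,\ldots,d_m\}$. Lemma \ref{lem:maximal_condition_3_21}(a) gives $d_1 \in \{1,2\}$, and Corollary \ref{cor:descent-increase-by-2} gives $d_{j+1}-d_j \ge 2$. One small point needs checking: $D(p)$ is nonempty. If $D(p)=\emptyset$, then by (c) the code $p$ is constant on $[1,n]$. Since $p_n=0$, we get $p=0$, but $0 \prec (1,0,\ldots,0)$, which is $3\underline{21}$-avoiding by Lemma \ref{lem:condition_3_21}. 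So $p$ is not maximal. (Alternatively, condition (a) already presupposes $m\ge 1$.)

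For the reverse inclusion, fix $D=\{d_1,\ldots,d_m\}\subseteq[n-1]$ with $d_1\in\{1,2\}$ and gaps at least $2$. Set $d_0=0$, $d_{m+1}=n$, and define $p$ by
$$p_i = n-(m-j)-d_j \quad \text{for } d_{j-1}<i\le d_j,\ 1\le j\le m,$$
with $p_i=0$ for $i>d_m$. We then check four things in order.

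\textbf{(i) $p$ is a valid Lehmer code.} For the upper bound, take $i\le d_j$. Then $p_i = n-m+j-d_j \le n-i$ because $m-j\ge 0$ and $d_j\ge i$. For the lower bound, the gap condition gives $d_m \le n-1$ and $d_j \le d_m-2(m-j)$, so $d_j \le n-1-2(m-j)$. Hence $p_{d_j} \ge 1+(m-j) \ge 1$.

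\textbf{(ii) $D(p)=D$.} At $d_j$ with $j<m$, we compute
$$p_{d_j}-p_{d_{j+1}} = (d_{j+1}-d_j)-1 \ge 1,$$
so $d_j$ is a descent. At $d_m$ we have $p_{d_m}\ge1>0=p_{d_m+1}$. Inside each block $p$ is constant, so there are no other descents.

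\textbf{(iii) $p$ is $3\underline{21}$-avoiding.} We verify condition \eqref{eqn:condition_3_21}. For $i$ in block $j'$ with $i<d_j$, we have $j'\le j$ and $p_i+i \le p_{d_{j'}}+d_{j'} = n-m+j'$. If $j'<j$ this is strictly less than $n-m+j = p_{d_j}+d_j$. If $j'=j$, then $i<d_j$ and $p_i=p_{d_j}$, so again $p_i+i < p_{d_j}+d_j$.

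\textbf{(iv) $p$ is maximal.} By construction, conditions (a), (b), (c) of Lemma \ref{lem:maximal_condition_3_21} hold, so $p\in\maxset{n}{3\underline{21}}$ and $D=D(p)$.

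The main obstacle is step (i), specifically the positivity of each $p_{d_j}$. This is exactly where the gap condition $d_{j+1}-d_j\ge2$ and $d_m\le n-1$ are needed. Without them, the prescribed values could be zero or negative, and the intended descents would collapse. Everything else is bookkeeping against Lemma \ref{lem:maximal_condition_3_21}.
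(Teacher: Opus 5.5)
Your proposal is correct and follows the paper's proof. The forward inclusion uses Lemma \ref{lem:maximal_condition_3_21}(a) and Corollary \ref{cor:descent-increase-by-2}, and the reverse inclusion uses the same block-constant code, with $d_j \le n-1-2(m-j)$ giving positivity. Your extra steps (ii) and (iii), checking that $D(p)=D$ and that $p$ satisfies condition \eqref{eqn:condition_3_21}, spell out what the paper leaves as ``by construction''; the converse of Lemma \ref{lem:maximal_condition_3_21} as proved assumes $p$ is already $3\underline{21}$-avoiding, so these checks are a worthwhile addition rather than a different route.
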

\begin{proof}
    Let $p=(p_1,\ldots,p_n)\in \maxset{n}{3\underline{21}}$, and let $D(p)=\{d_1, \cdots, d_m\}.$
    By Lemma \ref{lem:maximal_condition_3_21} (a), we have $d_1\in\{1,2\}$. 
    By Corollary \ref{cor:descent-increase-by-2}, $D(p)$ has no consecutive elements.
    
    
    Conversely, let $D=\{d_1, \cdots, d_m\}$ be a set with no consecutive elements and such that $d_1 \in \{1,2\}$.     
    Define $p=(p_1,\ldots,p_n)$ by the formula
    $$
 p_i = 
 \begin{cases}
     p_{d_1} = n - d_1 - (m-1) & \text{if } i \in [1, d_1], \\
     p_{d_j} = n - d_j - (m-j) & \text{if } i \in [d_{j-1}+1, d_j] \text{ where } 1 < j < m, \\
     0 & \text{if } i \in [d_m+1, n].
 \end{cases}
 $$
    If $p$ is a valid Lehmer code, then by construction, $p$ satisfies the conditions of Lemma \ref{lem:maximal_condition_3_21}, so $p \in \maxset{n}{3\underline{21}}$ and $D(p)=D$. So we need only show that $0 \leq p_i \leq n-i$ for $1 \leq i \leq n$.
 %
    This clearly holds for $i \in [d_m+1, n]$. For convenience, define $d_0=0$ and consider index $i \in [d_{j-1}+1, d_j]$. Then
    \[
    p_i=n-d_j-(m-j)\le n-d_j\le n-i.
    \]
    Also, since $D$ has no consecutive elements, we have $d_j+2(m-j)\le d_m\le n-1,$ so that 
    $d_j \leq n - 1 - 2(m-j)$.
    Therefore
    \[
    p_i=n-d_j-(m-j)\ge 1+(m-j)>0.
    \]
    So $p$ is a valid Lehmer code.
 %
\end{proof}





Our next objective is to show that $\maxset{n}{3\underline{21}}$ is a Fibonacci family. We could give give an inductive proof, but we will take this opportunity to make some progress on proving Theorem \ref{thm:gravity_bijection} at the same time. To start, we discuss the mapping $\flip: \descpat{n}{\underline{32}1} \rightarrow \descpat{n}{3\underline{21}}$ from equation \eqref{eqn:psi}, namely $\flip(D) = \{ n-d : d \in D \}.$

\begin{lemma}
\label{lem:descent_sets_reflect}
The mapping $\flip: \descpat{n}{\underline{32}1} \rightarrow \descpat{n}{3\underline{21}}$ is a bijection.
\end{lemma}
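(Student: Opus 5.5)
We use the explicit descriptions of both collections of descent sets, Lemma~\ref{lem:descent_sets_32_1_characterization} and Lemma~\ref{lem:descent_sets_3_21_characterization}. The strategy is to show that $\flip$ maps $\descpat{n}{\underline{32}1}$ into $\descpat{n}{3\underline{21}}$, and then that it is an involution on subsets of $[n-1]$.

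First we check that $\flip$ sends subsets of $[n-1]$ to subsets of $[n-1]$. If $1 \le d \le n-1$, then $1 \le n-d \le n-1$. So for any $D \subseteq [n-1]$ we have $\flip(D) \subseteq [n-1]$. Moreover, $d \mapsto n-d$ is injective, so $|\flip(D)| = |D|$.

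Next, let $D=\{d_1<\cdots<d_m\} \in \descpat{n}{\underline{32}1}$. Reflection reverses order, so
\[
\flip(D)=\{e_1<\cdots<e_m\}, \qquad e_j = n-d_{m+1-j}.
\]
We verify the two conditions of Lemma~\ref{lem:descent_sets_3_21_characterization}:
\begin{itemize}
\item Since $d_m\in\{n-2,n-1\}$, the smallest element is $e_1=n-d_m\in\{1,2\}$.
\item For consecutive elements,
\[
e_{j+1}-e_j = d_{m+1-j}-d_{m-j}\ge 2,
\]
by the gap condition in Lemma~\ref{lem:descent_sets_32_1_characterization}.
\end{itemize}
Hence $\flip(D)\in\descpat{n}{3\underline{21}}$.

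By the same computation in reverse, the map $\flip'(E)=\{n-e: e\in E\}$ sends $\descpat{n}{3\underline{21}}$ into $\descpat{n}{\underline{32}1}$. Indeed, $e_1\in\{1,2\}$ becomes a largest element in $\{n-2,n-1\}$, and the gaps are preserved. Since $n-(n-d)=d$, the maps $\flip'\circ\flip$ and $\flip\circ\flip'$ are both the identity. Therefore $\flip$ is a bijection with inverse $\flip'$.

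There is no real obstacle here. The only point needing care is bookkeeping: the reflection reverses the indexing, so the last-descent condition on one side becomes the first-descent condition on the other, and the empty set must not occur. The empty set is indeed excluded in both families, since $d_m$ and $d_1$ are required to exist.

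Combining this with Lemmas~\ref{lem:delta_32-1_bijection} and~\ref{lem:delta_3-21_bijection} shows that $\F$ is a bijection. It also gives $|\maxset{n}{3\underline{21}}|=F_n$ by Lemma~\ref{lem:maximal_32_1_number}.
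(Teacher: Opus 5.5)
Your proof is correct and takes essentially the same route as the paper. Both arguments use the two descent-set characterizations and note that $d \mapsto n-d$ preserves gaps and sends $\max D \in \{n-2,n-1\}$ to $\min \Phi(D) \in \{1,2\}$. The only difference is that you explicitly check that the inverse reflection maps $\descpat{n}{3\underline{21}}$ back into $\descpat{n}{\underline{32}1}$, which the paper compresses into ``clearly invertible.''
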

\begin{proof}
Let $D\in\descpat{n}{\underline{32}1}$. By Lemma \ref{lem:descent_sets_32_1_characterization},
$D$ is nonempty, has no consecutive elements, and satisfies $\max D\in\{n-2,n-1\}.$
Set $D'=\flip(D)$. The map $d\mapsto n-d$ preserves pairwise distances, so $D'$ has no consecutive elements. Finally, $\min D'=n-\max D\in\{1,2\}.$ Hence, by Lemma \ref{lem:descent_sets_3_21_characterization},
$
D'\in\descpat{n}{3\underline{21}},
$
and the mapping $\flip$ is well-defined. The mapping is clearly invertible, so it is a bijection. 
\end{proof}

Next, we turn our attention to the mapping $\F : \maxset{n}{\underline{32}1} \rightarrow \maxset{n}{3\underline{21}}$ from equation \eqref{eqn:gravity_bijection}, namely
$\F =\descmap{3\underline{21}}^{-1} \circ \flip \circ \descmap{\underline{32}1}$.
We show that this mapping is a bijection, deferring its relationship to the reverse-complement mapping $\revkomplempty$ until the next section. 

\begin{lemma}
\label{lem:gravity-bijection}
The mapping $\F : \maxset{n}{\underline{32}1} \rightarrow \maxset{n}{3\underline{21}}$ is a bijection.
\end{lemma}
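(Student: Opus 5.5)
The mapping $\F$ is a composition of three maps, so the plan is to show that each factor is a bijection between the appropriate sets and then use the fact that a composition of bijections is a bijection. The factors are
$$
\descmap{\underline{32}1} : \maxset{n}{\underline{32}1} \rightarrow \descpat{n}{\underline{32}1}, \qquad
\flip : \descpat{n}{\underline{32}1} \rightarrow \descpat{n}{3\underline{21}}, \qquad
\descmap{3\underline{21}}^{-1} : \descpat{n}{3\underline{21}} \rightarrow \maxset{n}{3\underline{21}}.
$$

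The steps, in order, are as follows.

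\begin{enumerate}
\item Lemma \ref{lem:delta_32-1_bijection} gives that $\descmap{\underline{32}1}$ is a bijection onto $\descpat{n}{\underline{32}1}$.
\item Lemma \ref{lem:descent_sets_reflect} gives that $\flip$ is well defined with image in $\descpat{n}{3\underline{21}}$, and that it is a bijection. Its inverse is $D \mapsto \{n-d : d\in D\}$, which sends $\descpat{n}{3\underline{21}}$ back into $\descpat{n}{\underline{32}1}$. This follows by comparing the characterizations in Lemmas \ref{lem:descent_sets_32_1_characterization} and \ref{lem:descent_sets_3_21_characterization}: the conditions $\min D\in\{1,2\}$ and $\max D\in\{n-2,n-1\}$ are swapped by $d\mapsto n-d$, and the gap condition is preserved.
\item Lemma \ref{lem:delta_3-21_bijection} gives that $\descmap{3\underline{21}}$ is a bijection from $\maxset{n}{3\underline{21}}$ onto $\descpat{n}{3\underline{21}}$. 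Its inverse is therefore a well-defined bijection, and this is what makes the composite formula \eqref{eqn:gravity_bijection} meaningful.
\end{enumerate}

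Composing these three bijections shows that $\F$ is a bijection. As a corollary, Lemma \ref{lem:maximal_32_1_number} then gives $|\maxset{n}{3\underline{21}}| = |\maxset{n}{\underline{32}1}| = F_n$.

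I do not expect a genuine obstacle here, since all of the substantive work is contained in the earlier lemmas. The only point that needs care is confirming that the domain and codomain of each factor match exactly. In particular, $\flip$ must map onto all of $\descpat{n}{3\underline{21}}$, not just into it. Surjectivity of $\flip$ follows because every set in $\descpat{n}{3\underline{21}}$ is nonempty (it contains $1$ or $2$), so its reflection satisfies the maximum condition of Lemma \ref{lem:descent_sets_32_1_characterization}.
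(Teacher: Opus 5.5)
Your proposal is correct and matches the paper's proof: $\F$ is the composite of the bijections given by Lemmas \ref{lem:delta_32-1_bijection}, \ref{lem:descent_sets_reflect} and \ref{lem:delta_3-21_bijection}. Your extra check that $\flip$ maps onto all of $\descpat{n}{3\underline{21}}$ (since $\min D' \in \{1,2\}$ forces $\max \flip^{-1}(D') \in \{n-2,n-1\}$) spells out what the paper only asserts as ``clearly invertible.''
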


\begin{proof}
By Lemmas \ref{lem:delta_32-1_bijection}, \ref{lem:delta_3-21_bijection} and \ref{lem:descent_sets_reflect}, this composite mapping is a bijection.    
\end{proof}

Our Fibonacci result for $\maxset{n}{3\underline{21}}$ follows directly.

\begin{lemma}
    \label{lem:maximal_3_21_number}
    We have $|\maxset{n}{3\underline{21}}| = F_{n}$, the $n$th Fibonacci number.
\end{lemma}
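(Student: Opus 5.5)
The statement $|\maxset{n}{3\underline{21}}| = F_n$ follows by transporting the count from the $\underline{32}1$ side through the bijection already built. By Lemma \ref{lem:gravity-bijection}, the map
$$
\F = \descmap{3\underline{21}}^{-1} \circ \flip \circ \descmap{\underline{32}1}
$$
is a bijection $\maxset{n}{\underline{32}1} \rightarrow \maxset{n}{3\underline{21}}$. Hence the two sets have the same cardinality, and Lemma \ref{lem:maximal_32_1_number} gives $|\maxset{n}{\underline{32}1}| = F_n$. Combining these, $|\maxset{n}{3\underline{21}}| = F_n$.

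The only point that needs care is that Lemma \ref{lem:gravity-bijection} really composes three bijections with matching domains and codomains:
\begin{itemize}
\item $\descmap{\underline{32}1}$ is a bijection onto $\descpat{n}{\underline{32}1}$ by Lemma \ref{lem:delta_32-1_bijection};
\item $\flip$ is a bijection from $\descpat{n}{\underline{32}1}$ onto $\descpat{n}{3\underline{21}}$ by Lemma \ref{lem:descent_sets_reflect}, which relies on the explicit characterizations in Lemmas \ref{lem:descent_sets_32_1_characterization} and \ref{lem:descent_sets_3_21_characterization};
\item $\descmap{3\underline{21}}$ is a bijection from $\maxset{n}{3\underline{21}}$ onto $\descpat{n}{3\underline{21}}$ by Lemma \ref{lem:delta_3-21_bijection}, so its inverse is defined on all of $\descpat{n}{3\underline{21}}$.
\end{itemize}
All of these were established earlier, so there is no real obstacle here.

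As a sanity check independent of $\F$, I would verify the count directly from Lemma \ref{lem:descent_sets_3_21_characterization}. Split $\descpat{n}{3\underline{21}}$ according to whether $n-1$ belongs to the set.
\begin{itemize}
\item Sets containing $n-1$: removing $n-1$ leaves a set whose largest element is at most $n-3$, and this corresponds to $\descpat{n-2}{3\underline{21}}$.
\item Sets not containing $n-1$: these are exactly the members of $\descpat{n-1}{3\underline{21}}$.
\end{itemize}
The small cases are $\descpat{2}{3\underline{21}} = \{\{1\}\}$ and $\descpat{3}{3\underline{21}} = \{\{1\},\{2\}\}$. This mirrors the proof of Lemma \ref{lem:maximal_32_1_number}, reflected via $d \mapsto n-d$, and reproduces the Fibonacci recurrence.
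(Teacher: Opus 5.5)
Your proof is correct and is exactly the paper's argument: Lemma~\ref{lem:gravity-bijection} gives $|\maxset{n}{3\underline{21}}| = |\maxset{n}{\underline{32}1}|$, and Lemma~\ref{lem:maximal_32_1_number} supplies $F_n$. Your sanity check is also sound: you split on whether $n-1$ is a descent, which works for $n \geq 4$ because the singleton $\{n-1\}$ is then excluded, and you combine it with Lemma~\ref{lem:delta_3-21_bijection} and your base cases. That check is the direct inductive proof the paper mentions but deliberately forgoes in favor of building $\F$.
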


\begin{proof}
By Lemmas \ref{lem:gravity-bijection} and \ref{lem:maximal_32_1_number}, we have $|\maxset{n}{3\underline{21}}| = |\maxset{n}{\underline{32}1}| = F_{n}$.
\end{proof}

We conclude this section with an important lemma about the descent set for a maximal $3\underline{21}$-avoiding permutation.

\begin{lemma}
    \label{lem:decrease_to_lmax}
    Let $\pi \in \permpat{n}{3\underline{21}}$ with corresponding Lehmer code $p=(p_1,\ldots,p_n)\in \maxset{n}{3\underline{21}}$ with descent set $D(p) = \{ d_1, \ldots, d_m \}$.
    Then 
    \[
    \pi_{d_j} = n-(m-j)
    \]
    for $1 \leq j \leq m$. Furthermore, 
        \[
        D(p)=
        \begin{cases}
            \lmax(\pi) & \mbox{if } d_1 = 1, \\
            \lmax(\pi) \setminus \{1\} & \mbox{if } d_1 = 2. \\ 
        \end{cases}
        \]
\end{lemma}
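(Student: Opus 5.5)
The plan is to show that each $d_j$ is a left-to-right maximum of $\pi$, then read off the values via Lemma \ref{lem:lehmer_equation_lmax}, and finally show there are no other left-to-right maxima except possibly index $1$.

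\textbf{Step 1: each $d_j$ is a left-to-right maximum.} By Lemma \ref{lem:maximal_condition_3_21}(b), the quantity $p_{d_j}+d_j = n-(m-j)$ is strictly increasing in $j$. By (c), $p$ is constant on $[d_{j-1}+1,d_j]$, so for $i$ in that block we have $p_i+i \le p_{d_j}+d_j$, with equality only at $i=d_j$. Combining these, $p_i+i < p_{d_j}+d_j$ for every $i<d_j$; this is also directly condition \eqref{eqn:condition_3_21}.

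To turn this into a statement about $\pi$, I would use the following general fact: if $p_i+i < p_j+j$ for all $i<j$, then $j\in\lmax(\pi)$. To prove it, suppose not and let $\pi_i = \max\{\pi_1,\ldots,\pi_{j-1}\} > \pi_j$. Exactly as in the proof of Lemma \ref{lem:condition_3_21}, each $k$ with $i<k\le j$ has $\pi_k<\pi_i$, so these $j-i$ entries are counted in $p_i$. In addition, the $p_j$ entries after $j$ that are smaller than $\pi_j$ are also smaller than $\pi_i$, and they lie after position $j$. Hence $p_i \ge (j-i)+p_j$, which contradicts the hypothesis.

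So each $d_j\in\lmax(\pi)$, and Lemma \ref{lem:lehmer_equation_lmax} gives $\pi_{d_j} = p_{d_j}+d_j = n-(m-j)$.

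\textbf{Step 2: no other index is a left-to-right maximum.} For the equality of sets, take $k\in\lmax(\pi)$ with $k\notin D(p)$ and $k\ge 2$. I will show this is impossible.

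First, $k>d_m$ is excluded, since $\pi_{d_m}=n$ and nothing after it can exceed it.

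Otherwise $k$ lies in some block $[d_{j-1}+1,d_j]$ with $k<d_j$, or in the block $[1,d_1]$ with $k<d_1$. Since $d_1\in\{1,2\}$, the second option forces $k=1$, which is the permitted exception.

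In the first option, $k\ge d_{j-1}+1$ with $j\ge2$. Since $k\in\lmax(\pi)$, Lemma \ref{lem:lehmer_equation_lmax} gives $\pi_k = p_k+k$. Because $p$ is constant on the block, $p_k = p_{d_j}$, so $\pi_k = p_{d_j}+k < p_{d_j}+d_j = \pi_{d_j}$. On the other hand, $k > d_{j-1}$ together with $k\in\lmax(\pi)$ gives $\pi_k > \pi_{d_{j-1}} = n-(m-j+1)$. Thus $\pi_k$ would be an integer strictly between $n-m+j-1$ and $n-m+j$, a contradiction.

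\textbf{Step 3: the index $1$.} Index $1$ is always in $\lmax(\pi)$. It belongs to $D(p)$ exactly when $d_1=1$. When $d_1=2$, we have $1\notin D(p)$, so it must be removed. This yields the two cases in the statement.

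The main obstacle is the general fact in Step 1 converting the Lehmer inequality into a left-to-right-maximum statement; it needs the careful counting argument to get $p_i\ge p_j+(j-i)$.
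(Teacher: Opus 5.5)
Your proof is correct and has the same skeleton as the paper's. Both show $D(p)\subseteq\lmax(\pi)$, then read off $\pi_{d_j}=p_{d_j}+d_j=n-(m-j)$ from Lemma~\ref{lem:lehmer_equation_lmax} and Lemma~\ref{lem:maximal_condition_3_21}(b), and finally handle index $1$. The differences are in the two inclusions.

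For $D(p)\subseteq\lmax(\pi)$, the paper argues on the permutation. Pattern avoidance gives $\pi_{d_1}<\cdots<\pi_{d_m}$, and constancy of $p$ on the blocks from Lemma~\ref{lem:maximal_condition_3_21}(c) gives increasing runs, so each $\pi_{d_j}$ is a prefix maximum. You instead turn condition \eqref{eqn:condition_3_21} into left-to-right maximality by a counting argument. Your argument never uses maximality of $p$, so it shows that every descent of every code in $\lehmerpat{n}{3\underline{21}}$ is a left-to-right maximum. The same fact is also visible directly from the pattern: a larger earlier entry would act as the $3$ in a $3\underline{21}$ together with the descent.

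For the reverse inclusion, the paper only notes that $\pi_{d_1},\ldots,\pi_{d_m}$ are the $m$ largest values in increasing order and then states the conclusion. It leaves implicit that no other index can be a left-to-right maximum. Your Step 2 checks this explicitly for positions after $d_m$, positions strictly inside a block, and the position before $d_1=2$. On this point your write-up is more complete than the paper's.
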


For an example with $1 \in D(p)$, consider $p = (6,5,5,3,3,3,0,0,0) \in \maxset{9}{3\underline{21}}$ with corresponding permutation $\pi=768459123$. Its descent set is $Z(p)=\{1,3,6\}$, and we have $\pi_1=7$, $\pi_3=8$ and $\pi_6=9$, and $\lmax(\pi)= \{1,3,6\}$, in accordance with the lemma. For an example with $2 \in D(s)$, consider $s=(5,5,4,4,2,2,2,0,0)$ with corresponding permutation $\sigma=675834912$. Its descent set is $Z(s)=\{2,4,7\}$, and we have $\sigma_2=7$,  $\sigma_4=8$ and $\sigma_7=9$, and $\lmax(\sigma) = \{1,2,4,7\}$, again adhering to the lemma.

\begin{proof}
    We first show that 
    \begin{equation}
    \label{eqn:decrease_to_lmax_1}
            \pi_{d_1}<\pi_{d_2}<\cdots<\pi_{d_m}.
    \end{equation}
    Suppose, for contradiction, that there exist $i < j$ such that $\pi_{d_i}>\pi_{d_j}.$
    Since $d_j\in D(p)$, we have $p_{d_j}>p_{d_j+1}.$ This descent in the Lehmer code gives $\pi_{d_j}>\pi_{d_j+1}.$ Therefore
    $
    \pi_{d_i}>\pi_{d_j}>\pi_{d_j+1},
    $
    and the entries at positions $d_i<d_j<d_j+1$ form a $3\underline{21}$ pattern, a contradiction. 
    
    Next we show that $D(p)\subseteq \lmax(\pi)$. Let $D(p) = \{ d_1, \ldots, d_m\}$ and set $d_0=0$ for convenience. By Lemma~\ref{lem:maximal_condition_3_21}(c), the Lehmer code $p$ is constant on the interval $[d_{j-1}+1,d_j]$ and $d_1\in\{1,2\}$. Equal Lehmer code values in consecutive positions decode to increasing permutation entries, so the corresponding permutation entries satisfy
    \begin{equation}
    \label{eqn:decrease_to_lmax_2}   
        \pi_{d_{j-1}+1}<\pi_{d_{j-1}+2}<\cdots<\pi_{d_{j}}.
    \end{equation}
    Together, equations \eqref{eqn:decrease_to_lmax_1} and \eqref{eqn:decrease_to_lmax_2}
    imply that $\pi_{d_j} = \max_{1 \leq \ell \leq d_j} \pi_{\ell}$. 
    Hence, $d_j\in \lmax(\pi)$ and therefore
    \[
    D(p) \subseteq \lmax(\pi).
    \]
    Now we can apply Lemma~\ref{lem:lehmer_equation_lmax} and Lemma~\ref{lem:maximal_condition_3_21}(b) to each index in $d_j \in D(p)$ to conclude that
    $$
    \pi_{d_j} = p_{d_j} + d_j 
    = n - (m-j).
    $$
    So $\pi_{d_1}, \pi_{d_2}, \ldots, \pi_{d_m}$ are the $m$ largest elements of $[n]$, in increasing order.  

    Finally, we have $d_1 \in \{1,2\}$ by Lemma \ref{lem:maximal_condition_3_21}(a). If $d_1=1$ then $D(p) = \lmax(\pi)$. However, if $d_1=2$, then $1 \in \lmax(\pi)$, so that
    $\lmax(\pi) = D(p) \cup \{ 1 \}$.
\end{proof}

%% file: section-gravity-bijection.tex
\section{Proof of Theorem \ref{thm:gravity_bijection}}

\label{sec:gravity-bijection}

We now prove our main theorem, which we restate for convenience.

\begin{theorem*}[Theorem \ref{thm:gravity_bijection}]
Let $\F: \maxset{n}{\underline{32}1} \rightarrow \maxset{n}{3\underline{21}}$
be given by
$$
\F := \descmap{3\underline{21}}^{-1} \circ \flip \circ \descmap{\underline{32}1}
$$
where $\flip(D) = \{ n-d : d \in D\}$. Then the mapping $\F$ is a bijection. Furthermore, $\F$ corresponds to taking the reverse-complement of the associated permutation. That is, for every $p \in \maxset{n}{\underline{32}1}$, we have
$$
\F(p) = \revkompl{p}.
$$
As a consequence, $w(p)= w(\F(p))$.
\end{theorem*}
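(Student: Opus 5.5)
Bijectivity of $\F$ is already Lemma \ref{lem:gravity-bijection}. It therefore remains to show $\F(p)=\revkompl{p}$ for every $p\in\maxset{n}{\underline{32}1}$. The weight identity follows at once, since the reverse-complement preserves inversion number and $w$ of a code equals $\inv$ of its permutation.

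Fix $p\in\maxset{n}{\underline{32}1}$ with permutation $\pi$, descent set $D(p)=\{d_1,\ldots,d_m\}$ and zero set $Z(p)=\{z_1,\ldots,z_\ell\}$. Set $\sigma=\revcompl{\pi}$ and $s=L(\sigma)=\revkompl{p}$. The plan is to show $s\in\maxset{n}{3\underline{21}}$ and $D(s)=\flip(D(p))$. Lemma \ref{lem:delta_3-21_bijection} then gives $s=\descmap{3\underline{21}}^{-1}(\flip(D(p)))=\F(p)$.

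\textbf{Step 1: $s\in\lehmerpat{n}{3\underline{21}}$.} The reverse-complement of a $\underline{32}1$ occurrence at positions $i,i+1,k$ is a $3\underline{21}$ occurrence at $n+1-k,\,n-i,\,n+1-i$. Hence $\sigma$ avoids $3\underline{21}$, as already noted in \cite{BHR}.

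\textbf{Step 2: compute $\sigma$ on its left-to-right maxima.} By Lemma \ref{lem:zero_equal_rmin} and Lemma \ref{lem:rmin_to_lmax}, $\lmax(\sigma)=\Rev(Z(p))$. By Lemma \ref{lem:zero_set_entry_incresaing_by_1}, $\pi_{z_k}=k$, so $\sigma_{n+1-z_k}=n+1-k$; that is, the left-to-right maxima of $\sigma$ take the values $n+1-\ell,\ldots,n$.

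Corollary \ref{cor:zero-set-32-1-avoiding} gives $Z(p)=\{d_j+1\}$, together with $n$ exactly when $d_m=n-2$. Therefore $\Rev(Z(p))=\{n-d_j\}=\flip(D(p))$, with $1$ added exactly when $d_m=n-2$, i.e.\ exactly when $\min\flip(D(p))=2$.

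\textbf{Step 3: determine $s$ between consecutive maxima.} Write $M=\lmax(\sigma)=\{e_0<e_1<\cdots\}$. For $i$ with $e_t\le i<e_{t+1}$, the prefix maximum of $\sigma$ is $\sigma_{e_t}$.

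Here is the key point, and the expected main obstacle. On a block of $\pi$ between zeros $z_{k-1}<i<z_k$, $p$ is constant, so $\pi$ is increasing there. Reversed and complemented, this block of $\sigma$ lying strictly between two left-to-right maxima is also increasing. Combined with $3\underline{21}$-avoidance, each non-maximum entry of $\sigma$ is smaller than all later large values. One shows $s$ is constant on $[e_t+1,e_{t+1}]$ by counting: $s_i$ is the number of later entries smaller than $\sigma_i$. Equivalently, since $\sigma$ restricted to each block is increasing, $s_i=s_{i+1}$ whenever $i+1$ is not a new maximum. At $e_{t+1}$ a descent occurs, because the next entry drops below the new maximum and $s_{e_{t+1}}=\sigma_{e_{t+1}}-e_{t+1}$ by Lemma \ref{lem:lehmer_equation_lmax}.

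\textbf{Step 4: check the conditions of Lemma \ref{lem:maximal_condition_3_21}.} The above gives $D(s)=\Rev(Z(p))\setminus\{1\}=\flip(D(p))$. It also gives $s_{d}+d=\sigma_d=n-(m-j)$ for the $j$th descent, and constancy on the required intervals. This is exactly (a)--(c) of Lemma \ref{lem:maximal_condition_3_21}, so $s\in\maxset{n}{3\underline{21}}$.

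I expect Step 3 to be the hardest part: verifying that there are no extra descents and that $s$ is genuinely constant. The first attempt would be to use Lemma \ref{lem:reversal_of_lehmer} and Lemma \ref{lem:complement_of_lehmer} to write $s_i=n-i-(\pi_{n+1-i}-1-p_{n+1-i})$. Then substitute the explicit block values of $p$ and the explicit values of $\pi$, which decode block by block as consecutive runs of integers skipping $1,\ldots,\ell$. Finally, verify constancy directly.
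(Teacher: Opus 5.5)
Your plan is sound, and it is organized differently from the paper's proof in a way that matters. The paper's proof proceeds as follows:
\begin{enumerate}
\item It sets $s=\F(p)$ and $t=\revkompl{p}$, and expresses $D(s)$ as $\Rev(Z(p))$ or $\Rev(Z(p)\setminus\{n\})$.
\item It reads off $D(t)$ from $\lmax(\revcompl{\pi})=\Rev(Z(p))$ via Lemma~\ref{lem:decrease_to_lmax}.
\item It decides whether $1\in D(t)$ by computing $t_1,t_2$ with Lemmas~\ref{lem:reversal_of_lehmer} and~\ref{lem:complement_of_lehmer}, using $\pi_n=\pi_{n-1}+1$.
\item It concludes $s=t$ from the injectivity of $\descmap{3\underline{21}}$.
\end{enumerate}
Both Lemma~\ref{lem:decrease_to_lmax} and that injectivity apply only to codes already known to lie in $\maxset{n}{3\underline{21}}$, and the paper's argument never shows that $t$ does. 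That is exactly the nontrivial fact that $\revkomplempty$ carries maximal codes to maximal codes. Your route proves this fact rather than presupposing it: avoidance in Step~1, then verifying (a)--(c) of Lemma~\ref{lem:maximal_condition_3_21} for $s=\revkompl{p}$ directly. The price is an explicit computation of $s$.

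Two points need repair.

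\textbf{Step~3.} The heuristic there cannot deliver constancy. If $\sigma$ increases between consecutive left-to-right maxima, every descent top is a left-to-right maximum, so $\sigma$ avoids $3\underline{21}$ automatically and avoidance adds nothing. For example, $\sigma=41523$ has left-to-right maxima $4,5$ (the two top values) and increasing runs in between, and it avoids $3\underline{21}$. Yet $L(\sigma)=(3,0,2,0,0)$ is not constant on $[2,3]$. Your ``equivalently'' clause also omits the step $s_{e_{t+1}-1}=s_{e_{t+1}}$ into the new maximum.

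What is needed is the decoded value structure, which your fallback computation supplies. With $z_0=0$, the block $(z_{k-1},z_k)$ of $\pi$ receives the consecutive values $n-z_k+k+1,\ldots,n-z_{k-1}+k-1$ in increasing order, so earlier blocks carry larger values. Hence $\sigma$ reads
\begin{itemize}
\item[] $n+1-\ell$, the image of block $\ell$, $n+2-\ell$, \ldots, $n$, and finally $1,2,\ldots,z_1-1$,
\end{itemize}
with later runs occupying smaller values. Counting then gives $s\equiv z_k-k$ on the image of block $k+1$ together with the maximum $n+1-k$ that follows it, for $1\le k\le\ell$ (block $\ell+1$ being empty). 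It also gives $s\equiv 0$ on the final run.

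\textbf{Step~4.} The formula $D(s)=\Rev(Z(p))\setminus\{1\}$ is false when $d_m=n-1$. In that case $2\notin\lmax(\sigma)$, so $\sigma_1>\sigma_2$ and $1\in D(s)=\Rev(Z(p))$. The correct statement is
\[
D(s)=\{e\in\lmax(\sigma): e+1\notin\lmax(\sigma)\}.
\]
This equals $\Rev(Z(p))$ if $d_m=n-1$ and $\Rev(Z(p))\setminus\{1\}$ if $d_m=n-2$, hence $\flip(D(p))$ in both cases. In the case $d_m=n-2$ you must also check $s_1=s_2$ for condition (c) on $[1,2]$. This holds because $\sigma_1=n+1-\ell$ and $\sigma_2=n+2-\ell$ are consecutive values.
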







\begin{figure}[ht]
\begin{center}
\begin{tikzpicture}[scale=.6]

\begin{scope}

    \node at (-3,2.5) {\footnotesize $\pi$};
    \node at (-3,0.5) {\footnotesize $p$};
    \node at (-3,-1) {\footnotesize $D(p)$};

        \node at (0,2.5) {\footnotesize $561423$};
        \newtri[(0, 0)]{4,4,0,2,0}{orange};
        \node at (0,-1) {\footnotesize $\{2,4\}$};

        \node at (3,2.5) {\footnotesize $614523$};
        \newtri[(3, 0)]{5,0,2,2,0}{orange};
        \node at (3,-1) {\footnotesize $\{1,4\}$};

        \node at (6,2.5) {\footnotesize $345612$};
        \newtri[(6, 0)]{2,2,2,2,0}{orange};
        \node at (6,-1) {\footnotesize $\{4\}$};

        \node at (9,2.5) {\footnotesize $615243$};        
        \newtri[(9, 0)]{5,0,3,0,1}{orange};
        \node at (9,-1) {\footnotesize $\{1,3,5\}$};

        \node at (12,2.5) {\footnotesize $456132$};        
        \newtri[(12, 0)]{3,3,3,0,1}{orange};
        \node at (12,-1) {\footnotesize $\{3,5\}$};

        \node at (15,2.5) {\footnotesize $561342$};
        \newtri[(15, 0)]{4,4,0,1,1}{orange};
        \node at (15,-1) {\footnotesize $\{2,5\}$};

        \node at (18,2.5) {\footnotesize $613452$};        
        \newtri[(18, 0)]{5,0,1,1,1}{orange};
        \node at (18,-1) {\footnotesize $\{1,5\}$};

        \node at (21,2.5) {\footnotesize $234561$};        
        \newtri[(21, 0)]{1,1,1,1,1}{orange};
        \node at (21,-1) {\footnotesize $\{5\}$};

\end{scope}

\begin{scope}[shift={(0,-5)}]

    \node at (-3,0.5) {\footnotesize $\F(p)$};
    \node at (-3,2.5) {\footnotesize $\flip(D(p))$};
    \node at (-3,-1) {\footnotesize $\revcompl{\pi}$};
    
        \newtri[(0, 0)]{3,3,2,2,0}{teal};
        \node at (0,2.5) {\footnotesize $\{2,4\}$};
        \node at (0,-1) {\footnotesize $453612$};

        \newtri[(3, 0)]{3,3,1,1,1}{teal};
        \node at (3,2.5) {\footnotesize $\{2,5\}$};
        \node at (3,-1) {\footnotesize $452361$};
        
        \newtri[(6, 0)]{4,4,0,0,0}{teal};
        \node at (6,2.5) {\footnotesize $\{2\}$};        
        \node at (6,-1) {\footnotesize $561234$};
        
        \newtri[(9, 0)]{3,2,2,1,1}{teal};
        \node at (9,2.5) {\footnotesize $\{1,3,5\}$};
        \node at (9,-1) {\footnotesize $435261$};        
        
        \newtri[(12, 0)]{4,3,3,0,0}{teal};
        \node at (12,2.5) {\footnotesize $\{1,3\}$};
        \node at (12,-1) {\footnotesize $546123$};
        
        \newtri[(15, 0)]{4,2,2,2,0}{teal};
        \node at (15,2.5) {\footnotesize $\{1,4\}$};
        \node at (15,-1) {\footnotesize $534612$};

        \newtri[(18, 0)]{4,1,1,1,1}{teal};
        \node at (18,2.5) {\footnotesize $\{1,5\}$};
        \node at (18,-1) {\footnotesize $523461$};
        
        \newtri[(21, 0)]{5,0,0,0,0}{teal};
        \node at (21,2.5) {\footnotesize $\{1\}$};
        \node at (21,-1) {\footnotesize $612345$};
  
\end{scope}

\end{tikzpicture}    

\caption{The bijection $\F$. We start with maximal Lehmer code $p \in \maxset{6}{\underline{32}1}$, and its corresponding permutation $\pi$. We map $p$ to its descent set $D(p) \in \descpat{6}{\underline{32}1}$. Then we apply the mapping $\flip$ to obtain the descent set $\flip(D(p)) \in \descpat{6}{3\underline{21}}$. Next, we convert that descent set into the maximal Lehmer code $\F(p) \in \maxset{6}{3\underline{21}}$, which corresponds to permutation $\revcompl{\pi}$.}

\label{fig:full-bijection}

\end{center}    
\end{figure}

Figure \ref{fig:full-bijection} shows an example of the mapping $\F:\maxset{6}{\underline{32}1} \rightarrow \maxset{6}{3\underline{21}}$, along with the corresponding starting and ending permutations.
We remind the reader that there is an intuitive geometric interpretation of the $\F$ mapping as a ``flip and fall'' operation, see Figure \ref{fig:reflect-gravity}. Starting with the triangle representation of $p \in \maxset{n}{\underline{32}1}$, we flip along the diagonal $y=x$, then allow the floating boxes to fall down to obtain $\F(p) \in \maxset{n}{3\underline{21}}$.

We take a moment to explain why this ``flip and fall'' interpretation holds. Suppose that $p \in \maxset{n}{\underline{32}1}$ with $D(p)=\{d_1, \ldots, d_m\}$. Then the corresponding $s = \F(p) \in \maxset{n}{3\underline{21}}$ has descent set $\{n-d_m, \ldots, n-d_1\}$. Considering the triangle representations, the $j$th descent of $p$ has coordinates $(d_j, n-d_j)$ by Lemma \ref{lem:maximal_32_1_lehmer_condition}(b). Meanwhile, the $(m+1-j)$th descent of $s$ has coordinates $(n-d_j, d_j - (j-1))$ by Lemma \ref{lem:maximal_condition_3_21}(b). The ``flip'' is $(d_j, n - d_j) \mapsto (n - d_j, d_j)$, which reflects across the diagonal $y=x$. The ``fall'' is $(n - d_j, d_j) \mapsto (n - d_j, d_j - (j-1))$, which packs the boxes into contiguous columns.


Our proof strategy is as follows. Starting with $p \in \maxset{n}{\underline{32}1}$, we set $s=\F(p)$ and $t=\revkompl{p}$. We then show the descent set equality $D(s)=D(t)$. Since $\maxset{n}{3\underline{21}}$ is in bijection with its descent sets $\descpat{n}{3\underline{21}}$, this will establish the theorem.






\begin{proof}[Proof of Theorem \ref{thm:gravity_bijection}]
    The mapping $\F$ is a bijection by Lemma \ref{lem:gravity-bijection}. 
    Turning to the final statement of the theorem, it is straightforward to show that $\inv(\rev{\pi}) = {n+1 \choose 2} - \inv(\pi)$ and that $\inv(\compl{\pi}) = {n+1 \choose 2} - \inv(\pi)$. Hence $\inv(\revcompl{\pi}) = \inv(\pi)$, which is equivalent to the statement $w(\revkompl{p}) = w(p)$ where $p=L(\pi)$, so that $w(p) = \sum_{i=1}^n p_i = \inv(\pi)$.
    So all that remains is to show that $\F(p) = \revkompl{p}$ for all $p \in \maxset{n}{\underline{32}1}$.

    Let $p = (p_1, \dots, p_n) \in \maxset{n}{\underline{32}1}$ with descent set
    $D(p) = \{ d_1, d_2, \ldots, d_m\}$.
    Let $s = \F(p)$ with corresponding permutation $\sigma$  and let $t = \revkompl{p}$ with corresponding permutation $\tau = \revcompl{\pi}$. 
    By Lemma \ref{lem:delta_3-21_bijection},
    we have $s=t$ if and only if 
    $
    D(s)=D(t),
    $
    so we turn our attention to these descent sets.
    
    First, we claim that
    \[
    D(s) =
    \begin{cases}
        \Rev(Z(p)) & \mbox{if } p_{n-1} = 1 > 0 = p_n, \\
        \Rev(Z(p) \setminus \{n\}) & \mbox{if } p_{n-1} = 0 = p_n,
    \end{cases}
    \]
    where we recall from Definition \ref{def:rev} that $\Rev(I) = \{ n+1 - i : i \in I\}$.
    Suppose that $p_{n-1}=1$, so that $d_m=n-1$. By Corollary \ref{cor:zero-set-32-1-avoiding}, we have $Z(p)=D'(p),$ where $D'(p)=\{d_i+1:1\le i\le m\}.$
    Therefore
    \[
    \Rev(Z(p))
    =\{n-d_m,n-d_{m-1},\ldots,n-d_1\}=\flip(D(p)).
    \]
    Since $s = \F(p) = \descmap{3\underline{21}}^{-1}\circ \flip\circ \descmap{\underline{32}1}(s),$ we have $D(s)=\flip(D(p)).$ Hence $D(s)=\Rev(Z(p)).$
    Now suppose that $p_{n-1}=0$, so that $d_m=n-2$. By the same corollary, $Z(p)=D'(p)\cup\{n\}.$
    Removing the index of the final zero gives
    \[
    \Rev(Z(p)\setminus\{n\}) = \{n-d_m,n-d_{m-1},\ldots,n-d_1\} = \flip(D(p)).
    \]
    Again, since $s=\F(p)$, we have $D(s)=\flip(D(p)).$ Therefore $D(s)=\Rev(Z(p)\setminus\{n\}).$

    We now have
    \[
    \lmax(\tau) = \Rev(\rmin(\pi)) =  \Rev(Z(p)),
    \]
    where the first equality follows from Lemma \ref{lem:rmin_to_lmax} and the second follows from
    Lemma \ref{lem:zero_equal_rmin}.
    By Lemma \ref{lem:decrease_to_lmax} (for $\tau \in \maxset{n}{3\underline{21}}$ and its Lehmer code $t$), we can now conclude that
    \[
    D(t)= 
    \begin{cases}
    \Rev(Z(p)) &\mbox{if } t_1 > t_2, \\
    \Rev(Z(p)) \setminus \{1\} &\mbox{if } t_1 = t_2.
    \end{cases}
    \]

Suppose that $p_{n-1}=p_n=0$. We claim that for $t=\revkompl{p}$, we have $t_1 = t_2$. 
By Lemmas \ref{lem:reversal_of_lehmer} and \ref{lem:complement_of_lehmer}, 
$$
t_1 = (n-1) - ( \pi_n - 1) 
= n  - \pi_n
$$
and
$$
t_2 = (n-2) - (\pi_{n-1}-1 )
= (n-1) - \pi_{n-1}.
$$
We have $t_1=t_2$ if and only if $\pi_{n} = \pi_{n-1} + 1$, this is true by Lemma \ref{lem:zero_set_entry_incresaing_by_1}.

In the other case, we have $p_{n-1}=1$ and $p_n=0$. By Lemmas \ref{lem:reversal_of_lehmer} and \ref{lem:complement_of_lehmer}, we then have
$$
t_1 = n-\pi_n
$$
and
$$
t_2 = n - \pi_{n-1},
$$
and therefore $t_1 \neq t_2$ because $\pi_n \neq \pi_{n-1}$.
In conclusion, we have $p_{n-1} = 0$ if and only if $t_1 = t_2$.

    Moreover,
    $\Rev(\{n\}) = \{1\}.$
    Thus the final zero removed from $Z(p)$ in the case $p_{n-1} = p_n$ corresponds exactly to the position $1$ removed from $\Rev(Z(r))$ in the case $t_1 = t_2$.
    
    Comparing the two case descriptions of $D(s)$ and $D(t)$, we obtain
    $D(s) = D(t).$
    Consequently, by Lemma~\ref{lem:delta_3-21_bijection} 
    we have $\F(p) = s = t = \revkompl{p}$.
\end{proof}